\documentclass[11pt]{article}
\usepackage[english]{babel}
\usepackage{ctex}
\usepackage{amsfonts}
\usepackage{mathrsfs}
\usepackage{bbm}
\usepackage{amsfonts}
\usepackage{multirow}
\usepackage{amssymb,amsmath,graphicx}
\usepackage{float}
\usepackage[colorlinks=true]{hyperref}
\usepackage{tikz}
\usepackage{setspace}
\usepackage{fancyhdr}
\usepackage{mathrsfs}
\usepackage{amsmath,amssymb}
\usepackage{amsthm}
\usepackage{mathtools}
\usepackage{amssymb}
\usepackage{bbm}
\usepackage{enumitem}
\usepackage{bm}
\usepackage{hyperref}
\usepackage{cite}
\usepackage{url}
\usepackage{algorithm2e}
\hypersetup{urlcolor=blue, citecolor=red}
\usepackage{pifont}
\usepackage[perpage,symbol*,bottom]{footmisc}
\DefineFNsymbols{circled}{{\ding{172}}{\ding{173}}{\ding{174}}
{\ding{175}}{\ding{176}}{\ding{177}}{\ding{178}}{\ding{179}}{\ding{180}}{\ding{181}}}
\setfnsymbol{circled}
\usepackage{scrextend}
\hypersetup{hidelinks}
\usepackage{longtable}
\usepackage{tikz}
\usepackage{array}
\usepackage{caption}
\usepackage{booktabs}
\usetikzlibrary{trees}

\allowdisplaybreaks[1]

\allowdisplaybreaks
\usepackage{cleveref}
\crefname{theorem}{Theorem}{Theorems}
\crefname{table}{Table}{Tables}
\crefname{lemma}{Lemma}{Lemmas}
\crefname{definition}{Definition}{Definitions}
\begin{document}
\title{A fractional attraction-repulsion chemotaxis system with time-space dependent growth source and nonlinear productions}
\author{%
  Liyan Song\thanks{Email: lysong0317@163.com},
  Qingchun Li\thanks{Corresponding author. Email: qingchunli\_dlut@163.com},
  Yang Cao\thanks{Email: mathcy@dlut.edu.cn}
\\[3pt]
\small \emph{School of Mathematical Sciences, Dalian University of Technology,}\\
\small \emph{Dalian 116024, China}
}
\date{}
\maketitle

{\bf Abstract}
This paper studies a fractional attraction-repulsion system with time-space dependent growth source and nonlinear productions:
\begin{equation*}
\left\{
\begin{aligned}\label{1.1}
&u_t = -(-\Delta)^\alpha u - \chi_1 \nabla \cdot (u \nabla v_1) + \chi_2 \nabla \cdot (u \nabla v_2) + a(x,t)u - b(x,t)u^\gamma, &x \in \mathbb{R}^N, \, t > 0, \\
&0 = \Delta v_1 - \lambda_1 v_1 + \mu_1 u^k, &x \in \mathbb{R}^N, \, t > 0, \\
&0 = \Delta v_2 - \lambda_2 v_2 + \mu_2 u^k, &x \in \mathbb{R}^N, \, t > 0.
\end{aligned}
\right.
\end{equation*}\\
We first establish the global boundedness of classical solutions with nonnegative bounded and uniformly continuous initial data in two different cases: $\gamma \geq k + 1$ and $\gamma < k + 1$, respectively. For a fixed $\gamma$, when $k$ exceeds the critical value $\gamma - 1$, a larger $b$ must be chosen to suppress the blow-up of the solution. Moreover, we show the persistence of the global solutions for both cases $\gamma = k + 1$ and $\gamma \neq k + 1$.

\medskip
\noindent {\bf Keywords:} Fractional chemotaxis system; Time-space logistic source; Nonlinear productions; Global boundedness; Persistence.\\

\newtheorem{theorem}{Theorem}[section]

\newtheorem{lemma}{Lemma}[section]
\newtheorem{definition}[lemma]{Definition}
\newtheorem{proposition}{Proposition}[section]
\newtheorem{corollary}{Corollary}[section]
\newtheorem{remark}{Remark}
\renewcommand{\theequation}{\thesection.\arabic{equation}}
\catcode`@=11 \@addtoreset{equation}{section} \catcode`@=12
\maketitle{}

\section{ Introduction }
The current paper is devoted to studying a fractional logistic type attraction-repulsion system with nonlinear productions
\begin{equation}
\left\{
\begin{aligned}\label{1.1}
&u_t = -(-\Delta)^\alpha u - \chi_1 \nabla \cdot (u \nabla v_1) + \chi_2 \nabla \cdot (u \nabla v_2) + a(x,t)u - b(x,t)u^\gamma, &x \in \mathbb{R}^N, \, t > 0, \\
&0 = \Delta v_1- \lambda_1 v_1 + \mu_1 u^k,  &x \in \mathbb{R}^N, \, t > 0, \\
&0 = \Delta v_2- \lambda_2 v_2 + \mu_2 u^k,  &x \in \mathbb{R}^N, \, t > 0,
\end{aligned}
\right.
\end{equation}
where $N \geq 1$, $\gamma > 1$, $\alpha \in ( \frac{1}{2}, 1 )$, $\chi_i \geq 0$, $k \geq 1$, $\lambda_i, \mu_i > 0 ~(i = 1,2)$ and \( a(x,t) \), \( b(x,t) \) are uniformly H\"{o}lder continuous in \( (x,t) \in \mathbb{R}^N \times \mathbb{R}\) with exponent \( 0 < \nu < 1 \) and satisfy the following conditions
\begin{equation}\label{0.9.0}
\begin{cases}
0 < a_{\inf} := \inf_{\substack{t \in \mathbb{R},  x \in \mathbb{R}^N}} a(x,t) \leq a_{\sup} := \sup_{\substack{t \in \mathbb{R}, x \in \mathbb{R}^N}} a(x,t) < +\infty,\\
0 < b_{\inf} := \inf_{\substack{t \in \mathbb{R},  x \in \mathbb{R}^N}} b(x,t) \leq b_{\sup} := \sup_{\substack{t \in \mathbb{R},  x \in \mathbb{R}^N}} b(x,t) < +\infty.
\end{cases}
\end{equation}

Since the pioneering studies by Keller and Segel in the 1970s \cite{KS,KSS}, extensive research has been devoted to exploring and generalizing the classical chemotaxis model \cite{NAYM,Y,YH,M,GJ,D,K}. It is now well recognized that in many biological systems, cellular behavior is influenced not only by chemoattractants but also by chemorepellent cues. Against the backdrop of this refined understanding of biological regulatory mechanisms, relevant scholars have shifted their focus to generalizing the logistic source term to better align with real-world biological complexity. Concurrently, as efforts to refine the logistic source term have intensified, researchers have been further motivated to move beyond its traditional fixed form. More flexible versions that explicitly incorporate spatiotemporal dependence have been introduced, allowing the model to capture dynamic environmental variations in biological contexts. Consider the following attraction-repulsion chemotaxis system, which explicitly includes a logistic source term varying in both space and time:
\begin{align}\label{0.2.1}
\begin{cases}
u_t = \Delta u - \chi_1 \nabla\cdot(u \nabla v_1) + \chi_2 \nabla\cdot(u \nabla v_2) + a(x,t)u - b(x,t)u^{\gamma}, & x \in \mathbb{R}^N, \, t > 0, \\
0 = \Delta v_1 - \lambda_1 v_1 + \mu_1 u, & x \in \mathbb{R}^N, \, t > 0, \\
0 = \Delta v_2 - \lambda_2 v_2 + \mu_2 u, & x \in \mathbb{R}^N, \, t > 0,
\end{cases}
\end{align}
when $a(x,t)$, $b(x,t)$ are constants, for the bounded domain, numerous studies have  investigated the dynamics of \eqref{0.2.1} with many interesting dynamical scenarios observed (see \cite{ZL,LX}). Moreover, to more accurately model the chemotactic cell movement toward and away from regions of a higher chemical signal concentration, many researchers have replaced the linear signal secretion $\mu u$ with a nonlinear production term $\mu u^k$ \cite{EM,EK,EKK,LMS,XZJ}. Nevertheless, studies concerning the whole space $\mathbb{R}^N$ within relevant domains are still comparatively limited. For example, Salako and Shen \cite{BW} investigated the asymptotic stability and spreading properties of solutions to \eqref{0.2.1} when $\gamma=2$. In contrast to the case in bounded domains, results for the Cauchy problem in the whole space involving both generalized logistic sources and nonlinear production terms are even more scarce. Notably, the work of Hassan et al. \cite{HZWY}, which established the existence of globally bounded classical solutions for a chemotaxis-consumption model with a logistic term of the form $au - bu^\gamma$, serves as a key motivation. Inspired by their findings, we introduce a generalized logistic exponent $\gamma$ into \eqref{1.1} and conduct a comparative analysis to elucidate the respective influences of $\gamma$ and the nonlinear exponent $k$ on the solution dynamics.

However, for the specific case of spatiotemporally dependent logistic sources on $\mathbb{R}^N$, a number of representative works \cite{SS1,SS2,SS3,SX1,BS1} have been conducted.
For the case without repulsion, Salako et al. \cite{SS1} focused on parabolic-elliptic chemotaxis models and analyzed the local existence, global existence, persistence and asymptotic spreading speed of the solutions. By comparing this spatiotemporally dependent logistic source model with the constant logistic source model and combining with the existing conclusions of the Fisher-KPP equation, they obtained the estimates of the asymptotic spreading speed of the solutions. In particular, when the chemotactic sensitivity coefficient \( \chi \to 0^+ \), the upper and lower bounds of the spreading speed are \( (c_-^*, c_+^*) \to (2\sqrt{a_{\inf}}, 2\sqrt{a_{\sup}}) \). This result corresponds to the spreading speed in the case of a constant logistic source, which verifies the rationality of the spatiotemporal dependence improvement. In 2020, Shen et al. \cite{SX1} investigated the global existence, persistence and spreading speed of solutions under the conditions that \( a(x,t) = r(x - ct) \), \( b(x,t) \) is a constant, and \( r(x) \) is H\"{o}lder continuous and bounded. It revealed that \( r(-\infty) < 0 < r(\infty) \) when \( b > \chi\mu \) and \( b \geq \left(1 + \frac{1}{2} \frac{(\sqrt{r^*} - \sqrt{\lambda})_+}{\sqrt{r^*} + \sqrt{\lambda}}\right)\chi\mu \). In addition, the phenomenon of species extinction occurs if the spreading speed \( c > c^* = 2\sqrt{r^*} \). When considering the chemotactic repellent signal $v_2$, Bao et al. \cite{BS1} considered the asymptotic dynamics in logistic type chemotaxis models in a free boundary or an unbounded boundary. They proved the global existence of solutions, pointwise/uniform persistence of solutions and stability of strictly positive entire solutions for the model in the unbounded domain.

Recent advances have revealed that classical chemotaxis systems fail to fully capture the complex behavioral patterns observed in various biological systems \cite{AYD,EC}. To better decribe these complex processes, researchers have recently studied the following fractional attraction-repulsion system
\begin{equation}\label{1.3}
\left\{
\begin{aligned}
&u_t = -(-\Delta)^\alpha u - \chi_1 \nabla \cdot (u \nabla v_1) + \chi_2 \nabla \cdot (u \nabla v_2) + a(x,t)u - b(x,t)u^2, & x \in \mathbb{R}^N, \, t > 0, \\
&0 = \Delta v_1- \lambda_1 v_1 + \mu_1 u, & x \in \mathbb{R}^N, \, t > 0, \\
&0 = \Delta v_2- \lambda_2 v_2 + \mu_2 u, & x \in \mathbb{R}^N, \, t > 0,
\end{aligned}
\right.
\end{equation}
when $a(x,t)$, $b(x,t)$ are constants, Zhang et al. \cite{ZZL} investigated the existence of globally bounded classical solutions and the asymptotic stability of positive constant steady states when $v_2\equiv 0$. On this basis, Jiang et al. \cite{JLL} incorporate chemotactic repulsion signals and establish the global existence, asymptotic behavior, and spreading properties of classical solutions for the system \eqref{1.3}. For the time-space dependent logistic source, Zhang et al. \cite{ZLZ1} considered a fractional parabolic-elliptic Keller-Segel system when $v_2\equiv 0$. They obtained the global boundedness and the pointwise/uniform persistence of classical solutions when $\alpha \in (\frac{1}{2},1)$.

However, there are few studies on fractional chemotaxis models with spatiotemporally dependent, generalized logistic source and nonlinear production. Inspired by \cite{BW,JLL,XZJ,LMS,SX,ZZL,SS1,BS1,ZLZ1}, we investigate the global boundedness and persistence of solutions for the fractional attraction-repulsion chemotaxis system \eqref{1.1}, which incorporates a logistic-type source and nonlinear signal production in the whole space.
The primary aim of this research is to investigate three key mechanisms in attraction-repulsion, fractional dynamics, and logistic-type sources with nonlinear production and to elucidate their role in regulating the behavior of solutions to \eqref{1.1}.
Throughout this paper, denote
\[
C_{unif}^b(\mathbb{R}^N) = \{ u \in C(\mathbb{R}^N) \mid u(x) \text{ is uniformly continuous in } x \in \mathbb{R}^N \text{and} \sup_{x \in \mathbb{R}^N} |u(x)| < \infty \}
\] provided with the norm $\| u \|_{L^\infty} = \sup_{x \in \mathbb{R}^N} | u(x) |$.

Now, we present the main results as follows.
\begin{theorem}[Global Boundedness]\label{thm:mytheorem1.2}
Assume that \eqref{0.9.0} holds. Let $\gamma > 1$, $\alpha \in ( \frac{1}{2}, 1 )$, $\chi_1, \chi_2 \geq 0$, $k \geq 1$, $\nu \in (2-2\alpha,1)$, $u_0 \in C_{unif}^b \left( \mathbb{R}^N \right)$ and $\inf\limits_{x \in \mathbb{R}^N} u_0(x) > 0$. Then \eqref{1.1} has a unique nonnegative global classical solution $(u, v_1, v_2)$ if one of the following assumptions holds:
\begin{itemize}
\item[\rm(a)] $\gamma \geq k + 1$, $b_{\inf} + \chi_2 \mu_2 - \chi_1 \mu_1 - M > 0$;
\item[\rm (b)] $\gamma < k + 1$, $\chi_2 \lambda_2 \mu_2 \geq \chi_1 \lambda_1 \mu_1$, $\lambda_1 \geq \lambda_2$;
\item[\rm(c)] $\gamma < k + 1$, $\| u_0 \|_{L^\infty} \leq ( \frac{b_{\inf} - a_{\sup}}{M + \chi_1 \mu_1} )^{\frac{1}{k}}$, $b_{\inf} > a_{\sup} + M + \chi_1 \mu_1$;
\item[\rm(d)] $\gamma \neq k + 1$, $\chi_2 \mu_2 = \chi_1 \mu_1$, $\lambda_1 = \lambda_2$,
\end{itemize}
where
\begin{align*}
M := \min \biggl\{ &\frac{1}{\lambda_1} \Bigl( (\chi_2 \mu_2 \lambda_2 - \chi_1 \mu_1 \lambda_1)_+ + \chi_2 \mu_2 (\lambda_1 - \lambda_2)_+ \Bigr), \\
&\frac{1}{\lambda_2} \Bigl( (\chi_2 \mu_2 \lambda_2 - \chi_1 \mu_1 \lambda_1)_+ + \chi_1 \mu_1 (\lambda_1 - \lambda_2)_+ \Bigr) \biggr\}.
\end{align*}
Furthermore, it holds that $\| u \|_{L^\infty} \leq C_0$, where
\begin{align*}
C_0 =
\begin{cases}
\| u_0 \|_{L^\infty}, &  if ~ \chi_i = a = b = 0, \\
\max \left\{ 1, \| u_0 \|_{L^\infty}, \left( \frac{a_{\sup}}{b_{\inf} + \chi_2 \mu_2 - \chi_1 \mu_1 - M} \right)^{\frac{1}{k}} \right\}, &  if ~ \mathrm{(a)} ~holds, \\
\max \left\{ 1, \| u_0 \|_{L^\infty}, \left( \frac{a_{\sup}}{b_{\inf}} \right)^{\frac{1}{\gamma - 1}} \right\}, & if ~ \mathrm{(b)} ~holds, \\
\max \left\{ 1, \| u_0 \|_{L^\infty}, C_* \right\}, & if ~ \mathrm{(c)} ~holds, \\
\max \left\{1, \| u_0 \|_{L^\infty}, \left( \frac{a_{\sup}}{b_{\inf}} \right)^{\frac{1}{\gamma - 1}} \right\}, & if ~ \mathrm{(d)} ~holds,
\end{cases}
\end{align*}
where \( C_* \leq \left( \frac{b_{\inf} - a_{\sup}}{M + \chi_1 \mu_1} \right)^{\frac{1}{k}} \).
If either
\begin{align*}
\gamma = k + 1, ~ b_{\inf} + \chi_2 \mu_2 - \chi_1 \mu_1 - M > 0
\end{align*}
or
\begin{align*}
\gamma \neq k + 1, ~ \chi_2 \mu_2 = \chi_1 \mu_1, \lambda_1 = \lambda_2,
\end{align*}
then

\noindent (i) for any \( u_0 \in C_{unif}^b(\mathbb{R}^N) \) and \( \inf_{x \in \mathbb{R}^N} u_0(x) > 0 \) with \( t_0 \in \mathbb{R} \), we can deduce that
\[
\begin{cases}
\liminf_{t \to \infty} \inf_{x \in \mathbb{R}^N} u(x ,t + t_0; t_0, u_0) \leq \left( \frac{a_{\sup} + M C_0^k}{b_{\inf} + \chi_2 \mu_2 - \chi_1 \mu_1} \right)^{\frac{1}{k}}, & \gamma = k + 1, \\
\liminf_{t \to \infty} \inf_{x \in \mathbb{R}^N} u(x ,t + t_0; t_0, u_0) \leq \left( \frac{a_{\sup}}{b_{\inf}} \right)^{\frac{1}{\gamma - 1}}, & \gamma \neq k + 1
\end{cases}
\]
and
\[
\begin{cases}
\left( \frac{a_{\inf}}{b_{\sup} + \chi_2 \mu_2} \right)^{\frac{1}{k}} \leq \limsup_{t \to \infty} \sup_{x \in \mathbb{R}^N} u(x ,t + t_0; t_0, u_0), & \gamma = k + 1, \\
\left( \frac{a_{\inf}}{b_{\sup}} \right)^{\frac{1}{\gamma - 1}} \leq \limsup_{t \to \infty} \sup_{x \in \mathbb{R}^N} u(x ,t + t_0; t_0, u_0), & \gamma \neq k + 1.
\end{cases}
\]
(ii) for every positive real number $M>0$, there is a constant $K$ such that for every $u_0 \in C_{{unif}}^b(\mathbb{R}^N)$ with $0\leq u_0 \leq C_0$, it follows that
\[
\| v_i(x ,t + t_0; t_0, u_0) \|_{C_{{unif}}^{1,\nu}(\mathbb{R}^N)} \leq K,~ i=1,2.
\]
(iii) let \( t_0 \in \mathbb{R} \) and \( u_0 \in C_{{unif}}^b(\mathbb{R}^N) \) be given such that \( u_0 \neq 0 \) and the solution \( u(x,t; t_0, u_0) \) is defined for all \( t \in [t_0, \infty) \) and satisfies \( \limsup\limits_{t \to \infty} \| u(\cdot ,t ; t_0, u_0) \|_{L^\infty} < \infty \), we have
\[
\begin{cases}
\limsup\limits_{t \to \infty} \| u(\cdot ,t; t_0, u_0) \|_{L^\infty} \leq \left( \frac{a_{{\sup}}}{b_{{\inf}} + \chi_2 \mu_2 - \chi_1 \mu_1 - M} \right)^{\frac{1}{k}}, & \gamma = k + 1, \\
\limsup\limits_{t \to \infty} \| u(\cdot ,t; t_0, u_0) \|_{L^\infty} \leq \left( \frac{a_{{\sup}}}{b_{{\inf}}} \right)^{\frac{1}{\gamma - 1}}, & \gamma \neq k + 1.
\end{cases}
\]
\end{theorem}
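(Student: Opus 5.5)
The argument rests on three ingredients: local well-posedness, a comparison argument on the $u$-equation after the chemotactic terms are expanded, and then persistence and asymptotic bounds obtained from the same inequality.

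\textbf{Local existence.} I would first establish a local classical solution on a maximal interval $[t_0,t_0+T_{\max})$. I would write $u$ in mild form with the fractional heat semigroup $e^{-t(-\Delta)^\alpha}$. Each $v_i$ is recovered as $(\lambda_i-\Delta)^{-1}(\mu_i u^k)$, which maps $C^b_{unif}$ into $C^{1,\nu}_{unif}$ and gives part (ii) once $u$ is bounded. The contraction is run in $C([t_0,t_0+T],C^b_{unif})$, using the gradient bound $\|\nabla e^{-t(-\Delta)^\alpha}f\|_\infty\le Ct^{-1/(2\alpha)}\|f\|_\infty$. This bound is integrable precisely because $\alpha>\tfrac12$. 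Nonnegativity follows from the comparison principle. The extension criterion is
\[
T_{\max}<\infty \;\Rightarrow\; \limsup_{t\to T_{\max}}\|u\|_{L^\infty}=\infty.
\]

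\textbf{Key inequality.} I would expand the chemotactic terms using the elliptic equations, $\Delta v_i=\lambda_i v_i-\mu_i u^k$. This gives
\[
u_t=-(-\Delta)^\alpha u-(\chi_1\nabla v_1-\chi_2\nabla v_2)\cdot\nabla u+u\bigl(a-bu^{\gamma-1}-(\chi_1\mu_1-\chi_2\mu_2)u^k+\chi_2\lambda_2v_2-\chi_1\lambda_1v_1\bigr).
\]
The term $\chi_2\lambda_2v_2-\chi_1\lambda_1v_1$ is the delicate one. I would write it in two ways:
\[
\chi_2\lambda_2v_2-\chi_1\lambda_1v_1=(\chi_2\mu_2\lambda_2-\chi_1\mu_1\lambda_1)\tfrac{v_1}{\mu_1}+\chi_2\mu_2\lambda_2\Bigl(\tfrac{v_2}{\mu_2}-\tfrac{v_1}{\mu_1}\Bigr),
\]
and symmetrically with the roles of $v_1$ and $v_2$ exchanged. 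The difference $\tfrac{v_2}{\mu_2}-\tfrac{v_1}{\mu_1}$ satisfies an elliptic equation whose source is $(\lambda_2-\lambda_1)\tfrac{v_2}{\mu_2}$. By the maximum principle for $\lambda-\Delta$, together with $\|v_i/\mu_i\|_\infty\le\|u\|_\infty^k/\lambda_i$, one gets
\[
\chi_2\lambda_2v_2-\chi_1\lambda_1v_1\le M\|u(\cdot,t)\|_\infty^k.
\]

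\textbf{Comparison with ODEs.} At a spatial maximum, or rather along near-maximizing sequences since $\mathbb R^N$ is not compact, the nonlocal term $-(-\Delta)^\alpha u$ is nonpositive and the gradient term vanishes. Hence $\bar u(t)=\|u(\cdot,t)\|_\infty$ is a subsolution of the ODE
\[
\bar u'\le \bar u\bigl(a_{\sup}-b_{\inf}\bar u^{\gamma-1}+(\chi_1\mu_1-\chi_2\mu_2+M)\bar u^k\bigr).
\]
This is made rigorous by comparing with spatially constant supersolutions of the linearized equation. The ODE is then analysed case by case.
\begin{itemize}
\item[(a)] For $\gamma\ge k+1$ and $\bar u\ge1$ we have $\bar u^{\gamma-1}\ge\bar u^k$. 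The right-hand side is then negative once $\bar u^k$ exceeds $a_{\sup}/(b_{\inf}+\chi_2\mu_2-\chi_1\mu_1-M)$, which yields the stated $C_0$.
\item[(b)] Here I would not pass to $M$. Instead I would use the exact identity for $\chi_2\lambda_2v_2-\chi_1\lambda_1v_1$ to show it is $\le(\chi_1\mu_1-\chi_2\mu_2)u^k$ pointwise. Both $\chi_2\mu_2\lambda_2\ge\chi_1\mu_1\lambda_1$ and $\lambda_1\ge\lambda_2$ are needed to order the resolvents, so that the $u^k$ terms cancel. What remains is $a-bu^{\gamma-1}$, which gives the bound $(a_{\sup}/b_{\inf})^{1/(\gamma-1)}$.
\item[(d)] This is the equality case of the same identity, so the chemotactic contribution vanishes exactly.
\item[(c)] The condition on $\|u_0\|_\infty$ keeps $\bar u$ in the invariant region $\bar u^k\le(b_{\inf}-a_{\sup})/(M+\chi_1\mu_1)$, where the right-hand side stays nonpositive. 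Here $\bar u^{\gamma-1}$ is bounded below using $\bar u\le1$ or $\ge1$.
\end{itemize}
Global existence then follows from the extension criterion.

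\textbf{Persistence and part (iii).} The estimates on $\limsup\sup u$ and $\liminf\inf u$ come from the same ODE comparison run at the minimum as well as the maximum. The upper asymptotic bound in (iii) uses the ODE upper inequality; the limit of its solution is the positive root of the right-hand side. For the lower estimate at $\inf_x u$, I would bound $\chi_2\lambda_2v_2\ge0$ and $-\chi_1\lambda_1v_1\ge-\chi_1\mu_1\sup u^k$, and use the a priori bound $C_0$. This produces the quantities $a_{\inf}/(b_{\sup}+\chi_2\mu_2)$ and $a_{\sup}+MC_0^k$ appearing in (i).

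\textbf{Main obstacle.} I expect the hardest step to be the rigorous comparison on $\mathbb R^N$ with the nonlocal operator. No maximum need be attained, so I would use the Kato-type inequality together with a penalization by $\varepsilon(1+|x|^2)^{\beta}$ with $\beta<\alpha$, which ensures $(-\Delta)^\alpha$ of the penalty is bounded. The second difficulty is the precise resolvent ordering in case (b).
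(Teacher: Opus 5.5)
Cases (a) and (d), part (ii), and the estimates in (i) go through essentially as in the paper. Your argument for case (b), however, fails as written.

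\textbf{The sign.} The local term in your expanded equation has the wrong sign. Since $-\chi_1 u\Delta v_1=-\chi_1\lambda_1uv_1+\chi_1\mu_1u^{k+1}$, the reaction term is $u\bigl(a-bu^{\gamma-1}+(\chi_1\mu_1-\chi_2\mu_2)u^k+\chi_2\lambda_2v_2-\chi_1\lambda_1v_1\bigr)$. Your ODE uses this sign, but your display and your case (b) use the opposite one.

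\textbf{The pointwise bound.} More seriously, no bound of the form $\chi_2\lambda_2v_2-\chi_1\lambda_1v_1\le c\,u^k$ can hold pointwise in $x$, because $v_i$ depends nonlocally on $u$. With $\chi_1=0$, which is allowed in (b), you would need $\lambda_2(\lambda_2-\Delta)^{-1}u^k\le u^k$. This is false at a global minimum of any nonconstant $u$, since $\lambda_2(\lambda_2-\Delta)^{-1}$ averages against a positive kernel of mass one. With your sign the right-hand side is even negative. Ordering the resolvents only ever gives bounds in terms of $\|u\|_{L^\infty}^k$.

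\textbf{The fix.} The remedy is exactly the step you chose to skip. When $\chi_2\lambda_2\mu_2\ge\chi_1\lambda_1\mu_1$ and $\lambda_1\ge\lambda_2$, both entries of the minimum defining $M$ equal $\chi_2\mu_2-\chi_1\mu_1$. The coefficient $\chi_1\mu_1-\chi_2\mu_2+M$ in your ODE then vanishes, and you get $\bar u'\le\bar u\,(a_{\sup}-b_{\inf}\bar u^{\gamma-1})$. The cancellation is only needed at (near-)maximizers, where $u(x)\approx\|u\|_{L^\infty}$. This is how the paper handles (b).

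\textbf{A smaller point in (c).} The right-hand side is not nonpositive on the whole region $\bar u\le\Theta:=\bigl((b_{\inf}-a_{\sup})/(M+\chi_1\mu_1)\bigr)^{1/k}$; it is positive near $0$. What you need, and have, is that it is $\le 0$ at the endpoint $\bar u=\Theta$, which uses $\Theta\ge 1$.

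\textbf{Comparison of routes.} With that repaired, your route is a valid alternative to the paper's.
\begin{itemize}
\item The paper never works with the solution's own sup norm. It fixes $u\in Q_0$ (so $0\le u\le C_0$) and freezes $v_1,v_2$. It bounds $\chi_2\lambda_2v_2-\chi_1\lambda_1v_1\le MC_0^k$ using the heat-kernel formula for $(\lambda_i-\Delta)^{-1}$ with the weight $(e^{-\lambda_2 s}-e^{-\lambda_1 s})_+$. It then compares the resulting scalar equation for $U$ with the constant $C_0$ and closes with Schauder's theorem.
\item Your a priori bound plus the continuation criterion avoids the fixed-point step. It also yields part (iii) for $\gamma=k+1$ in one stroke, as the attracting root of $\bar u'\le\bar u\bigl(a_{\sup}-(b_{\inf}+\chi_2\mu_2-\chi_1\mu_1-M)\bar u^k\bigr)$. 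The paper instead bootstraps with $M(\overline{u}+\varepsilon)^k$.
\item The price is a rigorous differential inequality for $t\mapsto\|u(\cdot,t)\|_{L^\infty}$ on $\mathbb{R}^N$ with the nonlocal operator. You need uniform second-order bounds so that near-maximizers have small gradient and asymptotically nonpositive $-(-\Delta)^\alpha u$, or else your penalization. The paper simply invokes the comparison principle against constants for the frozen equation.
\end{itemize}
Your maximum-principle derivation of the $M$-bound, via $(\lambda_1-\Delta)(v_2/\mu_2-v_1/\mu_1)=(\lambda_1-\lambda_2)v_2/\mu_2$, is equivalent to the paper's heat-kernel computation.
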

\begin{table}[H]
\centering
\small
\caption{The global boundedness of classical solutions}
\label{tab:your_label}
\begin{tabular}{
>{\centering\arraybackslash}m{4.8cm}
>{\centering\arraybackslash}m{3.8cm}
>{\centering\arraybackslash}m{5cm}
}
\toprule
& $(\mathrm{a})\ \gamma \geq k + 1$ & $(\mathrm{c})\ \gamma < k + 1$ \\
\midrule
$\begin{aligned}[c]
\chi_2\lambda_2\mu_2 &\geq \chi_1\lambda_1\mu_1, \\
\lambda_1 &\geq \lambda_2
\end{aligned}$
& $b_{\inf} > 0$
& $b_{\inf} > a_{\sup} + \chi_2\mu_2$ \\
\addlinespace[0.6em]
$\begin{aligned}[c]
\chi_2\lambda_2\mu_2 &\geq \chi_1\lambda_1\mu_1, \\
\lambda_1 &\leq \lambda_2
\end{aligned}$
&$b_{\inf} > \chi_1\mu_1 \biggl(1 - \frac{\lambda_1}{\lambda_2}\biggr)$
&$b_{\inf} > a_{\sup} + \chi_1\mu_1 \biggl(1 - \frac{\lambda_1}{\lambda_2}\biggr) + \chi_2\mu_2$ \\
\addlinespace[0.6em]
$\begin{aligned}[c]
\chi_2\lambda_2\mu_2 &\leq \chi_1\lambda_1\mu_1, \\
\lambda_1 &\geq \lambda_2
\end{aligned}$
& $b_{\inf} > \chi_1\mu_1 - \chi_2\mu_2\frac{\lambda_2}{\lambda_1}$
& $b_{\inf} > a_{\sup} + \chi_2\mu_2\biggl(1 - \frac{\lambda_2}{\lambda_1}\biggr)+ \chi_1\mu_1$ \\
\addlinespace[0.6em]
$\begin{aligned}[c]
\chi_2\lambda_2\mu_2 &\leq \chi_1\lambda_1\mu_1, \\
\lambda_1 &\leq \lambda_2
\end{aligned}$
& $b_{\inf} > \chi_1\mu_1 - \chi_2\mu_2$
& $b_{\inf} > a_{\sup} + \chi_1\mu_1$ \\
\bottomrule
\end{tabular}
\end{table}

\medskip
\noindent\textbf{Remark 1.1} As indicated in Columns 2 and 3 of \cref{tab:your_label}, for a fixed value of $\gamma$, comparisons between the scenarios where $\gamma \geq k+1$ and $\gamma < k+1$ reveal that a higher damping coefficient $b$ is necessary to prevent solution blow-up. This observation suggests that elevated values of $k$ stimulate greater secretion of chemical signaling molecules by cells, thereby enhancing cell attraction and population expansion. The inhibitory term $-bu^{\gamma}$ serves to curb cellular proliferation, necessitating a stronger damping effect via larger $b$ when $\gamma$ is insufficient to counteract the growth-driven effects associated with high $k$.

\noindent\textbf{Remark 1.2} In case (d), the global boundedness of solutions is established in the critical scenario, which in turn ensures the asymptotic stability of the constant equilibria.

\begin{theorem}[Persistence]\label{thm:mytheorem1.4}
Assume \eqref{0.9.0} hold.
Let \(\alpha \in (\frac{1}{2}, 1)\), \( k \geq 1 \) and \( \nu \in (2 - 2\alpha, 1) \).

\noindent(i) (Pointwise persistence)
If one of the following assumptions holds:
\begin{itemize}
\item[\rm(a)] $\gamma = k + 1$, $b_{\inf} + \chi_2 \mu_2 - \chi_1 \mu_1 - M > 0$;
\item[\rm(b)] $\gamma \neq k + 1$, $\chi_2 \mu_2 = \chi_1 \mu_1$, $\lambda_1 = \lambda_2$,
\end{itemize}
then for any strictly positive initial function \( u_0 \in C_{unif}^b(\mathbb{R}^N) \), there exist positive real numbers \( \bar{m}(u_0) > 0 \) and \( \bar{M}(u_0) > 0 \) such that
\[
\bar{m}(u_0) \leq u(x ,t + t_0; t_0, u_0) \leq \bar{M}(u_0)
\]
for all $t \geq 0$, $ x \in \mathbb{R}^N$, $ t_0 \in \mathbb{R}$.

\noindent(ii) (Uniform persistence) If one of the following assumptions holds:
\begin{itemize}
\item[\rm(a)] $\gamma = k + 1$, $b_{\inf} > \left(1 + \frac{a_{\sup}}{a_{\inf}}\right)\chi_1\mu_1 - \chi_2\mu_2 - M$;
\item[\rm(b)] $\gamma \neq k + 1$, $b_{\inf}^{\frac{k}{\gamma - 1}} > \frac{a_{\sup}^{\frac{k}{\gamma - 1}}}{a_{\inf}}\chi_1\mu_1$,
\end{itemize}
then for any \( t_0 \in \mathbb{R} \) and any strictly positive initial function \( u_0 \in C_{{unif}}^b(\mathbb{R}^N) \),
there exist \( 0 < \tilde{m} < \tilde{M} < \infty \) and \( T(u_0) > 0 \) such that
\[
\tilde{m} \leq u(x ,t + t_0; t_0, u_0) \leq \tilde{M}
\]
for all $t \geq T(u_0)$, $ x \in \mathbb{R}^N$, $ t_0 \in \mathbb{R}$.
\end{theorem}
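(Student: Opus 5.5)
We plan to prove Theorem~\ref{thm:mytheorem1.4} by comparison arguments for the $u$-equation, written in nondivergence form. Expanding the taxis terms and using the elliptic equations $\Delta v_i=\lambda_i v_i-\mu_i u^k$ gives
\begin{align*}
u_t=-(-\Delta)^\alpha u-(\chi_1\nabla v_1-\chi_2\nabla v_2)\cdot\nabla u+u\Bigl(a(x,t)-\chi_1\lambda_1v_1+\chi_2\lambda_2v_2+(\chi_1\mu_1-\chi_2\mu_2)u^k-b(x,t)u^{\gamma-1}\Bigr).
\end{align*}
The upper bounds $\bar M(u_0)$ and $\tilde M$ come directly from Theorem~\ref{thm:mytheorem1.2}: $\bar M=C_0$, and $\tilde M$ is slightly larger than the limsup bound in part (iii). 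So the work is in the lower bounds.

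\textbf{Pointwise persistence.} Fix $u_0$ with $\inf u_0>0$. By Theorem~\ref{thm:mytheorem1.2}, $0\le u\le C_0$. The maximum principle for $(-\Delta+\lambda_i)^{-1}$ then gives $0\le v_i\le \mu_iC_0^k/\lambda_i$, and part (ii) gives a bound on $\|\nabla v_i\|_{L^\infty}$.
\begin{itemize}
\item In case (b), $\chi_1\mu_1=\chi_2\mu_2$ and $\lambda_1=\lambda_2$, so $v_2=\frac{\mu_2}{\mu_1}v_1$. The zero-order coefficient then reduces to $a-bu^{\gamma-1}$.
\item In case (a), the coefficient is bounded below by $a_{\inf}-\chi_1\mu_1C_0^k-(b_{\sup}+\chi_2\mu_2)u^k$, which is at worst a bounded negative constant $-L$.
\end{itemize}
In both cases the comparison principle for the fractional linear operator with bounded drift gives $u\ge (\inf u_0)e^{-Lt}$ on every finite window $[t_0,t_0+T]$. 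For large times we use a spatially constant subsolution $\underline u(t)$ solving the ODE $\underline u'=\underline u\,(a_{\inf}-\chi_1\lambda_1\|v_1\|_\infty-(b_{\sup}+\chi_2\mu_2)\underline u^{k})$ in case (a), or the logistic ODE $\underline u'=\underline u\,(a_{\inf}-b_{\sup}\underline u^{\gamma-1})$ in case (b). In case (b) this immediately gives $u\ge\min\{\inf u_0,(a_{\inf}/b_{\sup})^{1/(\gamma-1)}\}$ for all $t$. In case (a) the term $-\chi_1\lambda_1v_1$ is not small, so we argue by contradiction instead, as in \cite{SS1,ZLZ1}. Suppose there are $t_n\to\infty$, $x_n$ with $u(x_n,t_n+t_0)\to0$. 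Translate, and use the uniform H\"older bounds (from $\nu\in(2-2\alpha,1)$) to pass to a limit solution $u^*$ on a time interval. By the strong maximum principle $u^*\equiv0$ on a neighbourhood of times, hence $v_1^*\equiv 0$ there. Then the coefficient there is close to $a_{\inf}>0$ on long intervals, and $u$ grows exponentially. This contradicts smallness provided we also have a lower bound at an earlier time, which the finite-window estimate supplies. This yields $\bar m(u_0)$, uniform in $t_0$, because all constants are independent of $t_0$.

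\textbf{Uniform persistence.} Here we need a lower bound independent of $u_0$ after a time $T(u_0)$. The extra assumptions are exactly those guaranteeing that, once $u\le \tilde M\approx$ the limsup bound of Theorem~\ref{thm:mytheorem1.2}(iii), we have $\chi_1\lambda_1 v_1\le\chi_1\mu_1\tilde M^k<a_{\inf}$. For instance, in case (b) this requires $\chi_1\mu_1(a_{\sup}/b_{\inf})^{k/(\gamma-1)}<a_{\inf}$, which is precisely hypothesis (ii)(b). Hence for $t\ge T(u_0)$ the coefficient is at least $\delta-Cu^{\min(k,\gamma-1)}$ with $\delta=a_{\inf}-\chi_1\mu_1\tilde M^k>0$, once the positive terms $\chi_2\lambda_2v_2$ and $\chi_1\mu_1u^k$ are dropped. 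Comparing with the ODE $\underline u'=\underline u(\delta-C\underline u^{\sigma})$, started at the pointwise lower bound $\bar m(u_0)$ at time $T(u_0)$, gives $\liminf u\ge(\delta/C)^{1/\sigma}$. We then set $\tilde m$ slightly smaller, enlarging $T(u_0)$ accordingly.

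The main obstacle is the pointwise lower bound in case (a) of part (i). There the attractive term $-\chi_1\lambda_1v_1$ is not controlled by $a_{\inf}$, so we must run the compactness and strong maximum principle argument for the nonlocal operator, with H\"older estimates uniform in $t_0$.
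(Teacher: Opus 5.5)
The gap is in part (i)(a), at the step where you say the contradiction closes.

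For an arbitrary sequence $t_n\to\infty$, $x_n$ with $u(x_n,t_n+t_0)\to0$, compactness and the strong maximum principle do give the following: for large $n$, $u$, $v_i$ and $\nabla v_i$ are small on $(x_n+D_L)\times[t_n+t_0-T,\,t_n+t_0]$. The principal-eigenfunction subsolution then yields at best $u(x_n,t_n+t_0)\ge c\,\inf_x u(x,t_n+t_0-T)$, with $c=e^{T\sigma_{L_0}/2}\phi_{L_0}(0)\ge1$ for $T$ large. Your finite-window estimate, however, only provides $\inf_x u(x,t_n+t_0-T)\ge(\inf u_0)e^{-L(t_n-T)}$, and this tends to $0$. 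So you have a factor $c\ge1$ times a quantity that may itself go to zero, and nothing is contradicted. A non-degenerating lower bound at the earlier time is exactly what is being proved, so as written the argument is circular.

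The missing idea is to tie the lower bound at the start of the window to the smallness at its end. The paper does this with \cref{3.2.0}: for fixed $T$ and every $\delta\le\delta_0^*(T)$, $\delta\le u(\cdot,s)\le M^+$ implies $u(\cdot,s+T)\ge\delta$, uniformly in $s$. The lemma is proved by contradiction, with the same $\delta_n$ serving both as the lower bound of the data and in $u(x_n,s_n+T)<\delta_n$. The growth factor then forces $u(x_n,s_n+T)\ge c\,\delta_n\ge\delta_n$, and the lemma is iterated in steps of length $T$.

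Within your framework the repair has three parts:
\begin{itemize}
\item Prove, by contradiction over sequences of solutions with varying $t_0$ (this, not a single-solution argument, is what makes the constants independent of $t_0$), that there is $\eta>0$ such that $u(x,t)<\eta$ with $t\ge t_0+T+1$ forces $u$, $v_i$, $\nabla v_i$ to be small on $(x+D_L)\times[t-T,t]$.
\item This gives the recursion $m(t)\ge\min\{\eta,\,c\,m(t-T)\}$ for $m(t):=\inf_x u(x,t)$. The finite-window bound is then only the base of the induction, and you get $m\ge\min\{\eta,(\inf u_0)e^{-L(T+1)}\}$.
\item You also need the small-drift perturbation step for the Dirichlet problem on $D_L$ (Step 1 of the proof of \cref{3.2.0}), because $b_\varepsilon\cdot\nabla\phi_L$ cannot be absorbed pointwise near $\partial D_L$.
\end{itemize}
So completed, your interior-window compactness is a reasonable alternative to the paper's split according to $|D_{0n}|$. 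The paper needs that split only because its windows start at the data time, where no H\"older compactness is available.

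The rest of your proposal is sound. In (i)(b), your observation that $v_2=\frac{\mu_2}{\mu_1}v_1$ (indeed $\chi_2v_2-\chi_1v_1\equiv0$) reduces the problem to a fractional Fisher--KPP equation. That is genuinely simpler than the paper, which pushes both cases through \cref{lemma:4.2} and \cref{3.2.0}. Part (ii) is the paper's ODE comparison, and it needs only $\inf_x u(x,T(u_0))>0$, which your finite-window bound already supplies. One caveat: in (ii)(a), positivity of $a_{\inf}-\chi_1\mu_1\tilde M^k$ requires $b_{\inf}>(1+\frac{a_{\sup}}{a_{\inf}})\chi_1\mu_1-\chi_2\mu_2+M$, with $+M$ rather than the $-M$ in the statement. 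The paper's own formula for $\tilde m$ needs the same condition, so your claim that the hypotheses are ``exactly'' what is needed holds only for (b).
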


\noindent\textbf{Remark 1.3} This result extends the persistence established by \cite{ZLZ1} with logistic source $\gamma = 2$, linear productions $k = 1$ and $v_2 \equiv 0$.

This paper is organized as follows. In Section 2, we prove the global existence and boundedness of classical solutions. The persistence of \eqref{1.1} is investigated in Section 3.
\section{ Global Boundedness }
This section deal with the global  boundedness of classical solutions to the system \eqref{1.1} by using
the Schauder's fixed point theorem.

\begin{proposition}[Local Existence and Uniqueness]\label{pro1.1}
Let $\gamma > 1$, $k \geq 1$, $\alpha \in ( \frac{1}{2}, 1)$, $\nu \in (2-2\alpha,1)$ and $0 \leq u_0 \in C_{unif}^b \left( \mathbb{R}^N \right)$. Then there exist $T_{\max}$ and a unique nonnegative classical solution $(u, v_1, v_2)$ on $[t_0, t_0+T_{\max})$ such that $\lim_{t \to 0^+} u(\cdot, t+t_0) = u_0$ and
\[
u \in C \left( [t_0, t_0+T_{\max}), C_{unif}^b \left( \mathbb{R}^N \right) \right) \cap C^{1,2} \left( (t_0, t_0+T_{\max}), C_{unif}^b \left( \mathbb{R}^N \right) \right).
\]
Moreover, if $T_{\max} < \infty$, then
\[
\limsup_{t \to T_{\max}} \| u(\cdot, t) \|_{L^\infty} = \infty.
\]
\end{proposition}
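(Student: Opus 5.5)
We will prove Proposition \ref{pro1.1} with a fixed point argument on the mild formulation, in the spirit of \cite{ZLZ1,JLL}. Let $T(t)=e^{-t(-\Delta)^\alpha}$ be the fractional heat semigroup on $C_{unif}^b(\mathbb{R}^N)$, with kernel $G_\alpha(\cdot,t)\ge 0$ and $\int G_\alpha=1$. We use the smoothing estimate
\[
\|\nabla T(t)f\|_{L^\infty}\le C t^{-\frac{1}{2\alpha}}\|f\|_{L^\infty},
\]
whose singularity is integrable in $t$ precisely because $\alpha>\frac12$. For a given $u$, we solve the elliptic equations explicitly:
\[
v_i=\mu_i(\lambda_i-\Delta)^{-1}u^k=\mu_i\int_0^\infty e^{-\lambda_i s}T_1(s)u^k\,ds,
\]
where $T_1$ is the classical heat semigroup. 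Standard elliptic estimates then give
\[
\|v_i\|_{C^{1,\nu}_{unif}}\le C\|u^k\|_{L^\infty}\le C\|u\|_{L^\infty}^k .
\]
We also get local Lipschitz dependence in $u$, since $|u^k-w^k|\le kR^{k-1}|u-w|$ when $\|u\|,\|w\|\le R$.

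\textbf{Fixed point.} Fix $R=2\|u_0\|_{L^\infty}+1$ and $T>0$ small. On the closed set
\[
X=\Big\{u\in C([t_0,t_0+T],C_{unif}^b):\ \sup_t\|u(t)\|_{L^\infty}\le R\Big\}
\]
define
\[
\Phi u(t)=T(t-t_0)u_0+\int_{t_0}^t T(t-s)\Big[-\chi_1\nabla\cdot(u\nabla v_1)+\chi_2\nabla\cdot(u\nabla v_2)+a u-b|u|^{\gamma-1}u\Big](s)\,ds .
\]
We move the divergence onto the semigroup. This yields
\[
\|\Phi u(t)-T(t-t_0)u_0\|_{L^\infty}\le C(R)\big(T^{1-\frac{1}{2\alpha}}+T\big),
\]
and the analogous difference bound with factor $C(R)(T^{1-\frac1{2\alpha}}+T)\sup\|u-w\|$. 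So for $T$ small, $\Phi$ maps $X$ to itself and is a contraction; one may equally invoke Schauder's theorem as announced. Uniform continuity in $x$ and continuity in $t$ of $\Phi u$ follow from the properties of $T(t)$ on $C_{unif}^b$. Uniqueness then follows from the same Lipschitz estimate combined with a Gronwall-type (singular kernel) inequality.

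\textbf{Regularity and nonnegativity.} This is the main obstacle: showing that the mild solution is classical. The plan is a bootstrap.
\begin{itemize}
\item The mild formula gives H\"older continuity of $u$ in $t$, of some order, for $t>t_0$.
\item The elliptic equations then give $v_i\in C^{2,\nu}$, so the drift term $u\nabla v_i$ and its divergence have H\"older regularity.
\item Fractional parabolic Schauder theory then yields $u\in C^{1,2}((t_0,t_0+T),C^b_{unif})$. This is where $\nu>2-2\alpha$ is used, so that the H\"older exponent in space exceeds the order gap and $(-\Delta)^\alpha u$ makes sense classically.
\end{itemize}
For nonnegativity, we rewrite the $u$-equation as
\[
u_t=-(-\Delta)^\alpha u+B\cdot\nabla u+c(x,t)u,
\]
with bounded coefficients $B=-\chi_1\nabla v_1+\chi_2\nabla v_2$ and
\[
c=-\chi_1\Delta v_1+\chi_2\Delta v_2+a-b|u|^{\gamma-1}.
\]
The comparison principle for the fractional operator on $\mathbb{R}^N$, applied to $e^{-\|c\|t}u$ at a (near) minimum point, gives $u\ge0$ since $u_0\ge0$. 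Positivity of $(\lambda_i-\Delta)^{-1}$ then gives $v_i\ge0$.

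\textbf{Extension and blow-up criterion.} The local time $T$ depends only on $\|u_0\|_{L^\infty}$. We can therefore restart the construction from $u(\cdot,t_0+T)$ and define $T_{\max}$ as the supremum of existence times. If $T_{\max}<\infty$ but $\|u(t)\|_{L^\infty}$ stayed bounded by some $R'$ near $T_{\max}$, restarting from a time within $T(R')$ of $T_{\max}$ would extend the solution past $T_{\max}$, a contradiction. Hence
\[
\limsup_{t\to T_{\max}}\|u(\cdot,t)\|_{L^\infty}=\infty .
\]
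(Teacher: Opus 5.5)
Your proposal is correct and follows the route the paper indicates: the paper omits the proof and refers to the contraction-mapping argument of \cite{JLL} on the mild formulation, whose key estimate is the bounded extension of $T(t)\nabla\cdot$ with the integrable singularity $t^{-\frac{1}{2\alpha}}$ (\cref{lemma2.1.0}), followed by the standard regularity, positivity and continuation steps you describe. Two points to tidy: in $X$, use $|u|^k$ (or $(u_+)^k$) in the elliptic equations until $u\ge 0$ is known, and start the bootstrap from the \emph{spatial} H\"older regularity of order $\theta<2\alpha-1$ that the mild formula supplies, writing the taxis term as $B\cdot\nabla u+cu$ (as in your positivity step) rather than claiming $\nabla\cdot(u\nabla v_i)$ is H\"older before $\nabla u$ is known to be.
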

We give the local existence and uniqueness of classical solutions to \eqref{1.1} with nonnegative initial data \( u_0 \in C_{unif}^b \left( \mathbb{R}^N \right) \) by using the contraction mapping theorem \cite{JLL}. We omit the proof here.

First, we consider the following Cauchy problem
\begin{equation}\label{1.0.1}
\left\{
\begin{aligned}
&u_t + (-\Delta)^\alpha u = 0, \quad &&x \in \mathbb{R}^N, \, t > 0, \\
&u(x, 0) = u_0(x), \quad &&x \in \mathbb{R}^N.
\end{aligned}
\right.
\end{equation}
The solutions of \eqref{1.0.1} can be written as $u(t) = K_t^\alpha (x) * u_0(x)$. Here, \( K_t^\alpha (x) \) is a fractional heat kernel, which is denoted by
\[
K_t^\alpha (x) := t^{-\frac{N}{2\alpha}} K^\alpha \left( t^{-\frac{1}{2\alpha}} x \right),
\]
where $K^\alpha (x) := (2\pi)^{-N} \int_{\mathbb{R}^N} e^{i \xi \cdot x} e^{-|\xi|^{2\alpha}} d\xi$ \cite{ZZL,DH}. \(-(-\Delta)^\alpha + I \) is the infinitesimal generator of an analytic semigroup \( \{ T(t) \}_{t \geq 0} \) and
\begin{equation*}
\begin{aligned}
T(t)u = e^{-t} (K_t^\alpha (x) * u)(x) = \int_{\mathbb{R}^N} e^{-t} K_t^\alpha (x - y) u(y) dy
\end{aligned}
\end{equation*}
for every \( u \in X \), \( x \in \mathbb{R}^N \) and \( t \geq 0 \), where Banach space \( X = C_{unif}^b (\mathbb{R}^N) \) or \( X = L^p (\mathbb{R}^N) \). We now present several lemmas of the fractional heat kernel theory.
\begin{lemma}[\cite{ZZL}, Lemma 4.2]\label{lemma2.1.0}
Suppose that \(\{T(t)\}_{t > 0}\) is the semigroup generated by \(-(-\Delta)^{\alpha} - I\) on \(C_{unif}^{b}(\mathbb{R}^N)\). For every \(t > 0\), the operator \(T(t)\nabla \cdot\) has a unique bounded extension on \((C_{unif}^{b}(\mathbb{R}^N))^N\) satisfying
\[
\| T(t)\nabla \cdot u \|_{L^\infty} \leq C_1 t^{-\frac{1}{2\alpha}} e^{-t} \| u \|_{L^\infty}
\]
for all $u \in (C_{unif}^{b}(\mathbb{R}^N))^N$ and $t > 0$, where \(C_1\) depends only on $\alpha$ and $N$.
\label{lemma:Lemma 2.1}
\end{lemma}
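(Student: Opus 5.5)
Since \(T(t) = e^{-t}K_t^\alpha *\), I will define the extension of \(T(t)\nabla\cdot\) by moving the derivative onto the kernel. For a smooth \(u\) (say \(u\in (C^1_{unif}\cap C^b)^N\)), integration by parts gives
\[
T(t)\nabla\cdot u(x) = e^{-t}\int_{\mathbb{R}^N} \nabla_x K_t^\alpha(x-y)\cdot u(y)\,dy .
\]
The boundary terms vanish because \(K_t^\alpha\) and \(\nabla K_t^\alpha\) decay at infinity. I then \emph{define} \(T(t)\nabla\cdot u\) by the right-hand side for every \(u\in (C_{unif}^b(\mathbb{R}^N))^N\). The stated bound will follow from
\[
\|T(t)\nabla\cdot u\|_{L^\infty}\le e^{-t}\|\nabla K_t^\alpha\|_{L^1}\|u\|_{L^\infty},
\]
so the whole task reduces to an \(L^1\) estimate of the gradient of the fractional heat kernel.

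\textbf{Scaling.} From \(K_t^\alpha(x)=t^{-N/(2\alpha)}K^\alpha(t^{-1/(2\alpha)}x)\) we get
\[
\nabla K_t^\alpha(x)=t^{-\frac{N}{2\alpha}-\frac{1}{2\alpha}}(\nabla K^\alpha)(t^{-\frac{1}{2\alpha}}x).
\]
The change of variables \(z=t^{-1/(2\alpha)}x\) then yields
\[
\|\nabla K_t^\alpha\|_{L^1}=t^{-\frac{1}{2\alpha}}\|\nabla K^\alpha\|_{L^1}.
\]
Hence \(C_1=\|\nabla K^\alpha\|_{L^1}\), which depends only on \(\alpha\) and \(N\).

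\textbf{Integrability of \(\nabla K^\alpha\) (main obstacle).} It remains to show \(\nabla K^\alpha\in L^1(\mathbb{R}^N)\). Smoothness and boundedness are immediate, since \(\xi e^{-|\xi|^{2\alpha}}\in L^1\) makes \(\nabla K^\alpha\) continuous and bounded. The difficulty is the decay at infinity. Here I will invoke the standard pointwise estimate for \(0<\alpha<1\),
\[
|\nabla K^\alpha(x)|\le C(1+|x|)^{-N-2\alpha-1}.
\]
One route to it is subordination: \(K^\alpha\) is an average of Gaussians against the \(\alpha\)-stable subordinator density, and differentiating the Gaussians and using the tail of that density gives the bound. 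An alternative route is a Littlewood–Paley/integration-by-parts argument in \(\xi\), treating the singularity of \(|\xi|^{2\alpha}\) at the origin carefully. Since the exponent \(N+2\alpha+1\) exceeds \(N\), the function is integrable and \(C_1<\infty\). If a self-contained argument is preferred, the weaker bound \(|\nabla K^\alpha(x)|\lesssim |x|^{-N-1-\epsilon}\) for some \(\epsilon>0\) already suffices.

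\textbf{Extension and uniqueness.} The integral formula is a bounded linear operator on \((C_{unif}^b)^N\) and agrees with \(T(t)\nabla\cdot\) on smooth fields. Smooth fields approximate uniformly continuous bounded fields in \(L^\infty\): mollification converges uniformly precisely because of uniform continuity. So the bounded extension exists, and it is unique by density and continuity. Finally, the resulting function lies in \(C_{unif}^b\), since convolution of a bounded function with the \(L^1\) kernel \(\nabla K_t^\alpha\) is uniformly continuous, by continuity of translation in \(L^1\).
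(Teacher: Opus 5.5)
Your argument is correct. The paper gives no proof of its own; it simply quotes Lemma 4.2 of \cite{ZZL}, and that result rests on exactly your reduction: moving the divergence onto the kernel, the scaling identity $\|\nabla K_t^\alpha\|_{L^1}=t^{-\frac{1}{2\alpha}}\|\nabla K^\alpha\|_{L^1}$, and extension by density of mollified fields in $(C_{unif}^b(\mathbb{R}^N))^N$.

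For the one imported fact, $\nabla K^\alpha\in L^1(\mathbb{R}^N)$, your subordination idea makes the pointwise decay bound unnecessary. Write $e^{-|\xi|^{2\alpha}}=\int_0^\infty e^{-s|\xi|^2}\eta_\alpha(s)\,ds$, where $\eta_\alpha$ is the $\alpha$-stable subordinator density with Laplace transform $e^{-\lambda^\alpha}$. For the Gaussian $g_s(x)=(4\pi s)^{-N/2}e^{-|x|^2/(4s)}$ one has $\|\nabla g_s\|_{L^1}=c_N s^{-1/2}$. Hence $\|\nabla K^\alpha\|_{L^1}\le c_N\int_0^\infty s^{-1/2}\eta_\alpha(s)\,ds=c_N\frac{\Gamma(1/(2\alpha))}{\alpha\sqrt{\pi}}<\infty$.
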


\begin{lemma}\label{lemma:Lemma 2.5}
For every \( u \in C_{unif}^b(\mathbb{R}^N) \), we can deduce that
\begin{align*}
\bigl\lVert \nabla \mu_i (\Delta - \lambda_i I)^{-1} u \bigr\rVert_{L^\infty}
\leq \frac{\sqrt{N} \, \mu_i}{\sqrt{\lambda_i}} \bigl\lVert u^{k} \bigr\rVert_{L^\infty}, \, i = 1, 2.
\end{align*}
\end{lemma}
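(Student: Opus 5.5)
The statement is a little garbled as written: the left-hand side applies $(\Delta-\lambda_i I)^{-1}$ to $u$, while the right-hand side involves $u^k$. I will read it as the estimate actually needed for the system, namely for $v_i = \mu_i(\lambda_i I-\Delta)^{-1}u^k$. Since $(\Delta-\lambda_i I)^{-1} = -(\lambda_i I-\Delta)^{-1}$, the sign does not affect the norm. Equivalently, I will prove the bound $\|\nabla(\lambda_i I-\Delta)^{-1}w\|_{L^\infty}\le \frac{\sqrt N}{\sqrt{\lambda_i}}\|w\|_{L^\infty}$ for every $w\in C_{unif}^b(\mathbb{R}^N)$, and then apply it with $w=u^k$. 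Note that $u^k\in C_{unif}^b(\mathbb{R}^N)$ whenever $u\in C_{unif}^b(\mathbb{R}^N)$, since $k\ge 1$ and $u$ is bounded, so $s\mapsto s^k$ is Lipschitz on the range of $u$.

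The approach is the subordination (Laplace transform) representation of the resolvent through the classical heat semigroup:
\[
(\lambda_i I-\Delta)^{-1}w=\int_0^\infty e^{-\lambda_i s}\,\big(G_s*w\big)\,ds,\qquad G_s(x)=(4\pi s)^{-\frac N2}e^{-\frac{|x|^2}{4s}} .
\]
This integral converges absolutely in $L^\infty$ and is the unique bounded solution of $\lambda_i v-\Delta v=w$. Uniqueness follows from the maximum principle for bounded solutions on $\mathbb{R}^N$, or equivalently from the Fourier symbol $(\lambda_i+|\xi|^2)^{-1}$.

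Next I differentiate under the integral. Since $\nabla G_s(x)=-\frac{x}{2s}G_s(x)$, each component satisfies
\[
\|\partial_{x_j}G_s\|_{L^1}=\frac{1}{2s}\int|x_j|G_s\,dx=\frac{1}{2s}\sqrt{\frac{4s}{\pi}}=\frac{1}{\sqrt{\pi s}} .
\]
Therefore $\|\partial_{x_j}(G_s*w)\|_{L^\infty}\le (\pi s)^{-1/2}\|w\|_{L^\infty}$. Integrating against $e^{-\lambda_i s}$ and using $\int_0^\infty s^{-1/2}e^{-\lambda_i s}ds=\sqrt{\pi/\lambda_i}$ gives
\[
\|\partial_{x_j}(\lambda_i I-\Delta)^{-1}w\|_{L^\infty}\le \lambda_i^{-1/2}\|w\|_{L^\infty}.
\]
Summing over the components, $|\nabla f|\le\sqrt N\max_j|\partial_j f|$, which yields the factor $\sqrt N/\sqrt{\lambda_i}$. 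Multiplying by $\mu_i$ completes the argument.

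The only point needing care is justifying the differentiation under the $s$-integral. The integrable singularity $s^{-1/2}$ near $s=0$ together with the decay $e^{-\lambda_i s}$ at infinity gives a dominating function, so dominated convergence applies to difference quotients. I expect no real obstacle beyond this bookkeeping and the notational correction on the operand ($u$ versus $u^k$).
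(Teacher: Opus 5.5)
Your proof is correct and takes essentially the paper's route. The paper defers to Lemma 3.3 of \cite{BW1}, which writes $(\lambda_i I-\Delta)^{-1}$ as the $e^{-\lambda_i s}$-weighted heat-kernel integral (the same representation the paper uses to bound $\chi_2\lambda_2v_2-\chi_1\lambda_1v_1$), bounds each $\partial_{x_j}$ by $\lambda_i^{-1/2}\|w\|_{L^\infty}$ via the Gaussian first moment, and sums over components to get the factor $\sqrt N$. Your reading of the operand as $u^k$ is also correct: the estimate for $v_i=\mu_i(\lambda_i I-\Delta)^{-1}u^k$ is exactly how the lemma is used later, and the literal statement fails for $k>1$ when $u$ is small.
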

\begin{proof}
Similar to the argument in the proof of Lemma 3.3 in \cite{BW1}, we omit the details here.
\end{proof}

\begin{definition}[\cite{PR}]\label{2.3.0}
For any real-valued function \( f \) on \( \mathbb{R}^N \), define
\[
K_f^{2\alpha}(r) := \sup_{x \in \mathbb{R}^N} \int_{B_r(x)} \frac{|f(y)|}{|x - y|^{N + 1 - 2\alpha}} dy, \text{ for } r > 0,
\]
where \( B_r(x) \) denotes the open ball centered at \( x \in \mathbb{R}^N \) with radius \( r \). Then \( f \) is said to belong to the Kato class \( K^{2\alpha - 1} \) if \( \lim_{r \to 0} K_f^{2\alpha}(r) = 0 \).
\label{def: Dedinition 1}
\end{definition}

\begin{lemma}[\cite{DH}, Exercise \(4^*\), page 190]\label{lemma:Lemma 2.99}
Suppose that \(a_1\), \(a_2\), \(\alpha_1\) and \(\alpha_2\) are nonnegative constants, \(0 \leq \alpha_1, \alpha_2 < 1\) and \(0 < T < \infty\). There exists a constant \(C(\alpha_1, \alpha_2, T) < \infty\) so that for any integrable function \(u : [0, T] \to \mathbb{R}\) satisfying
\[
0 \leq u(t) \leq a_1 t^{-\alpha_1} + a_2 \int_0^t (t - \tau)^{-\alpha_2} u(\tau) d\tau
\]
for a.e. \(t \in [0, T]\), we have
\[
0 \leq u(t) \leq \frac{a_1 t^{-\alpha_1}}{1 - \alpha_1} C(\alpha_2, a_2, T), \, \text{a.e. on } 0 < t < T.
\]
\end{lemma}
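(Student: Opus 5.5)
The plan is the standard iteration argument for singular Gronwall inequalities, as in Henry's treatment. Introduce the linear Volterra operator
\[
(\mathcal{B}w)(t) := a_2 \int_0^t (t-\tau)^{-\alpha_2} w(\tau)\, d\tau ,
\]
which is positivity preserving on integrable functions on $[0,T]$. The hypothesis reads $0 \le u \le a_1 t^{-\alpha_1} + \mathcal{B}u$ a.e. Applying $\mathcal{B}$ repeatedly and using positivity gives, for every $m \ge 1$,
\[
u(t) \le \sum_{n=0}^{m-1} \bigl(\mathcal{B}^n (a_1 \tau^{-\alpha_1})\bigr)(t) + (\mathcal{B}^m u)(t) .
\]
The proof then has two tasks: control the series explicitly, and show that the remainder $\mathcal{B}^m u$ tends to $0$.

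\textbf{The series.} Use the Beta-function identity $\int_0^t (t-\tau)^{-\alpha_2}\tau^{\beta-1}\,d\tau = B(1-\alpha_2,\beta)\, t^{\beta-\alpha_2}$. By induction,
\[
\mathcal{B}^n(\tau^{-\alpha_1})(t) = \frac{\bigl(a_2\Gamma(1-\alpha_2)\bigr)^n\,\Gamma(1-\alpha_1)}{\Gamma\bigl(n(1-\alpha_2)+1-\alpha_1\bigr)}\; t^{\,n(1-\alpha_2)-\alpha_1}.
\]
Since $t \le T$, factor out $t^{-\alpha_1}$ and bound $t^{n(1-\alpha_2)}$ by $T^{n(1-\alpha_2)}$. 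For the factor $\Gamma(1-\alpha_1)$, write $\Gamma(1-\alpha_1) = \Gamma(2-\alpha_1)/(1-\alpha_1) \le 1/(1-\alpha_1)$, because $\Gamma \le 1$ on $[1,2]$. This is exactly where the factor $\frac{1}{1-\alpha_1}$ in the claim comes from.

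It remains to bound $\sum_n \bigl(a_2\Gamma(1-\alpha_2)T^{1-\alpha_2}\bigr)^n / \Gamma\bigl(n(1-\alpha_2)+1-\alpha_1\bigr)$ by a constant independent of $\alpha_1$. I would use that $1/\Gamma$ is bounded on $(0,\infty)$ by roughly $1.13$. I would also use the elementary lower bound $\Gamma(x+s) \ge c\,\min\{\Gamma(x),\Gamma(x+1)\}$ for $s\in[0,1]$ and $x\ge 1$, with $c$ absolute, which follows from log-convexity of $\Gamma$. Together these reduce the sum to a Mittag-Leffler-type series $\sum_n z^n/\Gamma(n(1-\alpha_2))$, which converges for every $z$ by Stirling's formula. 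Its value depends only on $\alpha_2$, $a_2$ and $T$, and this defines $C(\alpha_2,a_2,T)$.

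\textbf{The remainder.} The operator $\mathcal{B}^m$ is again a convolution operator. Its kernel is
\[
\frac{\bigl(a_2\Gamma(1-\alpha_2)\bigr)^m}{\Gamma\bigl(m(1-\alpha_2)\bigr)}\,(t-\tau)^{m(1-\alpha_2)-1}.
\]
Once $m(1-\alpha_2) \ge 1$, this kernel is bounded on $[0,T]$ by $\bigl(a_2\Gamma(1-\alpha_2)\bigr)^m T^{m(1-\alpha_2)-1}/\Gamma\bigl(m(1-\alpha_2)\bigr)$. This quantity tends to $0$ as $m\to\infty$, by the super-exponential growth of $\Gamma$. Hence $|\mathcal{B}^m u(t)| \le (\text{that bound})\cdot\|u\|_{L^1(0,T)} \to 0$ uniformly in $t$, using only the integrability of $u$. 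Letting $m\to\infty$ yields the claimed estimate a.e. on $(0,T)$. Nonnegativity of $u$ is part of the hypothesis.

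\textbf{Main obstacle.} The only delicate step is the Gamma-function bookkeeping in the series. It must produce a constant independent of $\alpha_1$ apart from the explicit factor $\frac{1}{1-\alpha_1}$, and this requires the uniform lower bound on $\Gamma\bigl(n(1-\alpha_2)+1-\alpha_1\bigr)$ described above. If that becomes awkward, a cruder fallback is acceptable for the applications in the paper, where $\alpha_1$ is fixed: let $C$ depend on $\alpha_1$ as well.
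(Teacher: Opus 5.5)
Your proof is correct and is the standard argument. The paper gives no proof of its own; it cites Henry's Exercise~4$^*$, whose intended solution is exactly this iteration $u\le\sum_{n<m}\mathcal{B}^n(a_1\tau^{-\alpha_1})+\mathcal{B}^m u$ from Henry's proof of the singular Gronwall inequality in Section~7.1, with the Beta-function formula for the iterates and the remainder killed by the growth of $\Gamma(m(1-\alpha_2))$. Your Gamma bookkeeping works as claimed (e.g.\ $\Gamma(n(1-\alpha_2)+1-\alpha_1)\ge\Gamma(n(1-\alpha_2))$ once $n(1-\alpha_2)\ge 2$, and $1/\Gamma\le 1.13$ for the finitely many other terms), so the constant depends only on $(\alpha_2,a_2,T)$ as in the stated conclusion and no fallback is needed; the label $C(\alpha_1,\alpha_2,T)$ in the lemma's first sentence must be a typo, since the constant necessarily depends on $a_2$.
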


\noindent \textbf{Proof of Theorem 1.1.} We introduce a linear normed space \( Q = C^b_{\textit{unif}}(\mathbb{R}^N \times [t_0,T]) \) equipped with the norm
\[
\| u \|_Q := \sum_{\theta=1}^\infty \frac{1}{2^\theta} \| u \|_{L^\infty([-r,r] \times [t_0,T])},
\]
where \( T \) is a positive real number. We define a subset \( Q_0 \) of \( Q \) as
\[
Q_0 := \left\{ u \in C^b_{{unif}}(\mathbb{R}^N \times [t_0,T]) \,\bigg|\, u(\cdot, t_0) = u_0,\ 0 \leq u(x,t) \leq C_0,\ x \in \mathbb{R}^N,\ t_0 \leq t \leq T \right\}.
\]
For any $u_0 \in Q_0$, we have $\|u\|_Q \leq C_0$. Given that the initial data $u_0 \in Q_0$ satisfies $\inf_{x \in \mathbb{R}^N} u_0(x) > 0$, the classical solution $(u, v_1, v_2)$ to \eqref{1.1} exists for $t \in [t_0, t_0 + T_{\text{max}})$. For any $x \in \mathbb{R}^N$, we have
\begin{equation*}
\begin{split}
u_t &= -(-\Delta)^\alpha u + \nabla(\chi_2 v_2 - \chi_1 v_1)\cdot \nabla u \\
&\quad + u\left( a(x,t) + \chi_2 \lambda_2 v_2 - \chi_1 \lambda_1 v_1 - b(x,t) u^{\gamma-1} + (\chi_1 \mu_1 - \chi_2 \mu_2) u^k \right),
\end{split}
\end{equation*}
where \( v_i \) satisfies $v_i = (\lambda_i I - \Delta)^{-1} \mu_i u^k$, $i = 1, 2$.

Let \( U(x,t,u):=U(x,t) \) be the solution of the following initial value problem
\begin{equation*}
\begin{cases}
\begin{aligned}
U_t &= -(-\Delta)^\alpha U + \nabla\left(\chi_2 v_2 - \chi_1 v_1\right)\cdot \nabla U \\
&\quad + U\Bigl( a(x,t) + \chi_2 \lambda_2 v_2 - \chi_1 \lambda_1 v_1 - b(x,t) U^{\gamma-1} + (\chi_1 \mu_1 - \chi_2 \mu_2) U^k \Bigr),
\end{aligned} \\[6pt]
U(\cdot, t_0) = u_0.
\end{cases}
\end{equation*}
For every \( u \in Q_0 \), we derive that
\begin{align*}
(\chi_2\lambda_2v_2 - \chi_1\lambda_1v_1)(x,t) &= \int_0^\infty \int_{\mathbb{R}^N}
\frac{e^{-\frac{|x-z|^2}{4s}}}{(4\pi s)^{\frac{N}{2}}}
\left(
\chi_2\lambda_2\mu_2 e^{-\lambda_2 s}
- \chi_1\lambda_1\mu_1 e^{-\lambda_2 s}
\right. \\
&\quad \left.
+ \chi_1\lambda_1\mu_1 e^{-\lambda_2 s}
- \chi_1\lambda_1\mu_1 e^{-\lambda_1 s}
\right)
u^k(z,t) \, dz \, ds \\
&\leq (\chi_2\lambda_2\mu_2 - \chi_1\lambda_1\mu_1)_+ C_0^k
\int_0^\infty \int_{\mathbb{R}^N}
\frac{e^{-\frac{|x-z|^2}{4s}}}{(4\pi s)^{\frac{N}{2}}}
e^{-\lambda_2 s} \, dz \, ds \\
&\quad + \chi_1\lambda_1\mu_1 C_0^k
\int_0^\infty \int_{\mathbb{R}^N}
\frac{e^{-\frac{|x-z|^2}{4s}}}{(4\pi s)^{\frac{N}{2}}}
\left( e^{-\lambda_2 s} - e^{-\lambda_1 s} \right)_+ \, dz \, ds \\
&\leq \frac{C_0^k}{\lambda_2}
\left[
(\chi_2\lambda_2\mu_2 - \chi_1\lambda_1\mu_1)_+
+ \chi_1\mu_1 (\lambda_1 - \lambda_2)_+
\right].
\end{align*}
Similarly, we have
\begin{align*}
(\chi_2\lambda_2 v_2 - \chi_1\lambda_1 v_1)(x,t) \leq \frac{C_0^k}{\lambda_1} \left[ (\chi_2\lambda_2\mu_2 - \chi_1\lambda_1\mu_1)_+ + \chi_2\mu_2 (\lambda_1 - \lambda_2)_+ \right].
\end{align*}
\noindent Hence, we have \( (\chi_2\lambda_2 v_2 - \chi_1\lambda_1v_1)(x,t) \leq M C_0^k \), where \( M \) is given in \cref{thm:mytheorem1.2}.
\noindent Then for any \( u \in Q_0 \), we obtain
\begin{equation*}
\begin{aligned}
U_t \leq -(-\Delta)^\alpha U + \nabla(\chi_2 v_2 - \chi_1v_1)\cdot \nabla U + U\left( a_{\sup} + M C_0^k - b_{\inf} U^{\gamma-1} + (\chi_1\mu_1 - \chi_2\mu_2) U^k \right).
\end{aligned}
\end{equation*}

Next, we will show that \( U(x,t) \in Q_0 \). We divide the proof into four cases.

\noindent{\textbf{Case 1}. \( \gamma \geq k+1 \)}.

Let \( C_0 := \max\left\{1, \| u_0 \|_{L^\infty}, \left( \frac{a_{\sup}}{b_{\inf} + \chi_2\mu_2 - \chi_1\mu_1 - M} \right)^{\frac{1}{k}} \right\} \). Since \( b_{\inf} + \chi_2\mu_2 - \chi_1\mu_1 - M > 0 \), we have
\[
a_{\sup} + (M + \chi_1\mu_1 - \chi_2\mu_2)C_0^k - b_{\inf} C_0^{\gamma - 1} \leq a_{\sup} + (M + \chi_1\mu_1 - \chi_2\mu_2 - b_{\inf})C_0^k \leq 0.
\]
For any $u \in Q_0$, according to the comparison principle of parabolic equations, we obtain
\[
U(x,t) \leq C_0
\]
for all $t \in [t_0,T]$, $x \in \mathbb{R}^N$.\\
\textbf{Case 2.} \( \gamma \leq k + 1 \).

Let \( C_0 := \max\left\{ 1, \| u_0 \|_{L^\infty}, \left( \frac{a_{\sup}}{b_{\inf}} \right)^{\frac{1}{\gamma - 1}} \right\} \). Due to \( \chi_2\lambda_2\mu_2 > \chi_1\lambda_1\mu_1 \) and \( \lambda_1 > \lambda_2 \), we can derive that \( M = \chi_2\mu_2 - \chi_1\mu_1 \) and
\[
a_{\sup} + (M + \chi_1\mu_1 - \chi_2\mu_2)C_0^k - b_{\inf} C_0^{\gamma - 1} = a_{\sup} - b_{\inf} C_0^{\gamma - 1} \leq 0.
\]
For any $u \in Q_0$, we apply the comparison principle of parabolic equations to obtain
\[
U(x,t) \leq C_0
\]
for all $t \in [t_0,T]$, $x \in \mathbb{R}^N$.

\noindent \(\textbf{Case 3.}\) \( \gamma \leq k + 1 \).

Let \( C_0 := \max\left\{ 1, \| u_0 \|_{L^\infty}, C_* \right\} \), \( C_* \leq \left( \frac{b_{\inf} - a_{\sup}}{M + \chi_1\mu_1} \right)^{\frac{1}{k}} \). Since \( \| u_0 \|_{L^\infty} \leq \left( \frac{b_{\inf} - a_{\sup}}{M + \chi_1\mu_1} \right)^{\frac{1}{k}} \) and \( b_{\inf} > a_{\sup} + M + \chi_1\mu_1 \), we have
\[
a_{\sup} + (M + \chi_1\mu_1)C_0^k - b_{\inf} C_0^{\gamma - 1} \leq a_{\sup} + (M + \chi_1\mu_1)C_0^k - b_{\inf} \leq 0.
\]
For any $u \in Q_0$, using the comparison principle of parabolic equations, we have
\[
U(x,t) \leq C_0
\]
for all $t \in [t_0,T]$, $x \in \mathbb{R}^N$.

\noindent \(\textbf{Case 4.}\) \( \gamma \neq k + 1 \).

Let \( C_0 := \max\left\{ 1, \| u_0 \|_{L^\infty}, \left( \frac{a_{\sup}}{b_{\inf}} \right)^{\frac{1}{\gamma - 1}} \right\} \). Due to \( \chi_2\mu_2 = \chi_1\mu_1 \), \( \lambda_1 = \lambda_2 \), we have \( M = 0 \) and
\[
a_{\sup} + (M + \chi_1\mu_1 - \chi_2\mu_2)C_0^k - b_{\inf} C_0^{\gamma - 1} = a_{\sup} - b_{\inf} C_0^{\gamma - 1} \leq 0.
\]
For any $u \in Q_0$, we conclude from the comparison principle of parabolic equations that
\[
U(x,t) \leq C_0
\]
for all $t \in [t_0,T]$, $x \in \mathbb{R}^N$.
Thus, for any \( u \in Q_0 \), we have \( U(x,t) \in Q_0 \).

It can be known from the proof process in \cite{JLL} that the mapping \( u \mapsto U(x,t,u) \in Q_0 \) is both compact and continuous. By using the Schauder fixed point theorem, we can know that there exists a fixed point \( u^* \). Thus, \( (u^*, v_1(\cdot,\cdot, u^*), v_2(\cdot,\cdot, u^*)) \) is a classical solution of \eqref{1.1}. In light of the local existence of solutions, we have \( T_{\text{max}} \geq T \) and \( u(\cdot,t, u_0) = u^* \). Since \( T > 0 \) is arbitrary, it follows that \( T_{\text{max}} = \infty \). Next, we present the proofs of the remaining conclusions of \cref{thm:mytheorem1.2}.

\noindent \textbf{Claim 1.} we prove
\begin{align}\label{0.9.1}
\limsup\limits_{t \to \infty} \| u(t, \cdot; t_0, u_0) \|_{L^\infty} \leq M^{+},
\end{align}
where
\begin{align}\label{0.9.1.0}M^{+}=
\begin{cases}
 \left( \frac{a_{\text{sup}}}{b_{{\inf}} + \chi_2 \mu_2 - \chi_1 \mu_1 - M} \right)^{\frac{1}{k}}, & \gamma = k + 1, \\
 \left( \frac{a_{\text{sup}}}{b_{{\inf}}} \right)^{\frac{1}{\gamma - 1}}, & \gamma \neq k + 1.
\end{cases}
\end{align}

We discuss it in two different cases.

\noindent Case 1. \( \gamma = k + 1 \).

Let
\[
\overline{u} = \limsup\limits_{t \to \infty} \sup\limits_{x \in \mathbb{R}^N} u(x, t; t_0, u_0).
\]
Assume that \( \overline{u} < \infty \), for any \( \varepsilon > 0 \), there exists a \( T_\varepsilon > 0 \) such that
\[
(\chi_2 \lambda_2 v_2 - \chi_1 \lambda_1 v_1)(x, t; t_0, u_0) \leq M (\overline{u} + \varepsilon)^k.
\]
Therefore, it follows that
\[
\begin{aligned}
u_t &= -(-\Delta)^\alpha u + \nabla (\chi_2 v_2 - \chi_1 v_1)\cdot \nabla u\\
&\quad + u \left( a(x, t) + (\chi_2 \lambda_2 v_2 - \chi_1 \lambda_1 v_1) - \left( b(x, t) + \chi_2 \mu_2 - \chi_1 \mu_1 \right) u^k \right) \\
&\leq -(-\Delta)^\alpha u + \nabla (\chi_2 v_2 - \chi_1 v_1)\cdot \nabla u + u \left( a_{{\sup}} +M (\overline{u} + \varepsilon)^k - \left( b_{{\inf}} + \chi_2 \mu_2 - \chi_1 \mu_1 \right) u^k \right)
\end{aligned}
\]
for any \( t \geq t_0 + T_\varepsilon \). Let \( \hat{U}(t) \) be a solution to the following ODE
\[
\begin{cases}
\hat{U}' = \hat{U} \left( a_{\text{sup}} + M (\overline{u} + \varepsilon)^k - ( b_{{\inf}} + \chi_2 \mu_2 - \chi_1 \mu_1 \right) \hat{U}^k ), \\
\hat{U}(t_0 + T_\varepsilon) = \| u(\cdot ,t_0 + T_\varepsilon; t_0, u_0) \|_{L^\infty}.
\end{cases}
\]
According to the comparison principle for parabolic equations, we can obtain that
\[
u(x, t + t_0; t_0, u_0) \leq \hat{U}(t)
\]
for all $t \geq t_0 + T_\varepsilon$, $x \in \mathbb{R}^N$. It is easy to know that
\[
\lim_{t \to \infty} \hat{U}(t) = \left( \frac{a_{{\sup}} + M (\overline{u} + \varepsilon)^k}{b_{{\inf}} + \chi_2 \mu_2 - \chi_1 \mu_1} \right)^{\frac{1}{k}},
\]
then
\[
\overline{u} \leq \left( \frac{a_{\text{sup}} + M (\overline{u} + \varepsilon)^k}{b_{\text{inf}} + \chi_2 \mu_2 - \chi_1 \mu_1} \right)^{\frac{1}{k}}.
\]
A simple calculation shows that
\[
\limsup_{t \to \infty} \| u(\cdot ,t; t_0, u_0) \|_{L^\infty} \leq \left( \frac{a_{\text{sup}}}{b_{\text{inf}} + \chi_2 \mu_2 - \chi_1 \mu_1 - M} \right)^{\frac{1}{k}}.
\]
Case 2. \( \gamma \neq k + 1 \).

Similar to the processes for Case 1, we omit the details herein.

\noindent \textbf{Claim 2.} For any \( u_0 \in C_{unif}^b(\mathbb{R}^N) \) and \( \inf_{x \in \mathbb{R}^N} u_0(x) > 0 \) with \( t_0 \in \mathbb{R} \), it follows that
\[
\begin{cases}
\left( \frac{a_{\inf}}{b_{\sup} + \chi_2 \mu_2} \right)^{\frac{1}{k}} \leq \limsup_{t \to \infty} \sup_{x \in \mathbb{R}^N} u(x, t + t_0; t_0, u_0), & \gamma = k + 1, \\
\left( \frac{a_{\inf}}{b_{\sup}} \right)^{\frac{1}{\gamma - 1}} \leq \limsup_{t \to \infty} \sup_{x \in \mathbb{R}^N} u(x, t + t_0; t_0, u_0), & \gamma \neq k + 1
\end{cases}
\]
and
\[
\begin{cases}
\liminf_{t \to \infty} \inf_{x \in \mathbb{R}^N} u(x, t + t_0; t_0, u_0) \leq \left( \frac{a_{\sup} + M C_0^k}{b_{\inf} + \chi_2 \mu_2 - \chi_1 \mu_1} \right)^{\frac{1}{k}}, & \gamma = k + 1, \\
\liminf_{t \to \infty} \inf_{x \in \mathbb{R}^N} u(x, t + t_0; t_0, u_0) \leq \left( \frac{a_{\sup}}{b_{\inf}} \right)^{\frac{1}{\gamma - 1}}, & \gamma \neq k + 1.
\end{cases}
\]

We discuss it in two different cases.

\noindent Case 1. \( \gamma = k + 1 \).

From \eqref{1.1}, it follows that
\begin{align*}
&\quad u_t + (-\Delta)^{\alpha} u - \nabla (\chi_2 v_2 - \chi_1 v_1)\cdot \nabla u \\
&= u \left( a(x, t) + (\chi_2 \lambda_2 v_2 - \chi_1 \lambda_1 v_1) - \left( b(x, t) + \chi_2 \mu_2 - \chi_1 \mu_1 \right) u^k \right) \\
&\geq u \left( a_{\inf} - \chi_1 \lambda_1 \frac{\mu_1}{\lambda_1} \| u \|_{L^{\infty}}^k - \left( b_{\sup} + \chi_2 \mu_2 - \chi_1 \mu_1 \right) \| u \|_{L^{\infty}}^k \right) \\
&\geq u \left( a_{\inf} - \left( b_{\sup} + \chi_2 \mu_2 \right) \| u \|_{L^{\infty}}^k \right).
\end{align*}
According to the comparison principle for parabolic equations, we get
\[
u(x,t + t_0; t_0, u_0) \geq e^{\int_{t_0}^{t + t_0} \left( a_{\inf} - (b_{\sup} + \chi_2 \mu_2) \| u \|_{L^{\infty}}^k \right) ds} u_{0\inf}
\]
for all $t \geq 0$. Furthermore, since \( u_{0\inf} > 0 \), we have $ \inf_{x \in \mathbb{R}^N} u(x, t + t_0; t_0, u_0) \geq 0$ for all $t \geq 0 $. For any \( \varepsilon > 0 \), there exists \( T_{\varepsilon} > 0 \) such that
\begin{align}\label{eq:2.26}
u(x, t + t_0; t_0, u_0) &\leq u^{\infty} + \varepsilon.
\end{align}
Therefore, applying the comparison principle for elliptic equations, we deduce that
\begin{align}\label{eq:2.27}
v_i(x, t + t_0; t_0, u_0) &\leq \frac{\mu_i}{\lambda_i} (u^{\infty} + \varepsilon)^k, \, i = 1, 2.
\end{align}
where \( u^{\infty} = \limsup_{t \to \infty} \sup_{x \in \mathbb{R}^N} u(x, t + t_0; t_0, u_0) \). Combining \eqref{eq:2.26} with \eqref{eq:2.27}, we then have
\[
u_t + (-\Delta)^{\alpha} u - \nabla (\chi_2 v_2 - \chi_1 v_1) \cdot\nabla u \geq u \left( a_{\inf} - (b_{\sup} + \chi_2 \mu_2) (u^{\infty} + \varepsilon)^k \right).
\]
According to the comparison principle for parabolic equations, it follows that
\begin{align}\label{0.9.3}
u(x, t + t_0; t_0, u_0) \geq e^{\left( a_{\inf} - (b_{\sup} + \chi_2 \mu_2) (u^{\infty} + \varepsilon)^k \right) (t - T_{\varepsilon})} \inf_{x \in \mathbb{R}^N} u(x,T_{\varepsilon} + t_0; t_0, u_0)
\end{align}
for all $t \geq T_{\varepsilon}$. Due to the boundedness of \( u( x,t + t_0; t_0, u_0) \), we can obtain
\[
a_{\inf} - (u^{\infty} + \varepsilon)^k (b_{\sup} + \chi_2 \mu_2) \leq 0
\]
for all $\varepsilon > 0$. The arbitrariness of \( \varepsilon \) implies that
\begin{align*}
\left( \frac{a_{\inf}}{b_{\sup} + \chi_2 \mu_2} \right)^{\frac{1}{k}} \leq \limsup_{t \to \infty} \sup_{x \in \mathbb{R}^N} u(x,t + t_0; t_0, u_0).
\end{align*}
If \( \liminf_{t \to \infty} \inf_{x \in \mathbb{R}^N} u(x,t + t_0; t_0, u_0) = 0 \), the conclusion holds obviously. Therefore, we discuss the case when \( \liminf_{t \to \infty} \inf_{x \in \mathbb{R}^N} u(x,t + t_0; t_0, u_0) > 0 \).

For any \( 0 < \varepsilon < u_{\infty} \), there exists \( T_{\varepsilon} > 0 \), we derive that
\[
u(x,t + t_0; t_0, u_0) \geq u_{\infty} - \varepsilon.
\]
Thus, by using the comparison principle for elliptic equations, we can conclude that
\[
v_i(x,t + t_0; t_0, u_0) \geq \frac{\mu_i}{\lambda_i} (u_{\infty} - \varepsilon)^k, \, i = 1, 2.
\]
Note that
\begin{align*}
&\quad u_t + (-\Delta)^{\alpha} u - \nabla (\chi_2 v_2 - \chi_1 v_1)\cdot \nabla u\\
&= u \left( a(x,t) + (\chi_2 \lambda_2 v_2 - \chi_1 \lambda_1 v_1) - \left( b(x,t) + \chi_2 \mu_2 - \chi_1 \mu_1 \right) u^k \right) \\
&\leq u \left( a_{\sup} + (\chi_2 \lambda_2 v_2 - \chi_1 \lambda_1 v_1) - \left( b_{\inf} + \chi_2 \mu_2 - \chi_1 \mu_1 \right) u^k \right) \\
&\leq u \left( a_{\sup} + M C_0^k - \left( b_{\inf} + \chi_2 \mu_2 - \chi_1 \mu_1 \right) u^k \right) \\
&\leq u \left( a_{\sup} + M C_0^k - \left( b_{\inf} + \chi_2 \mu_2 - \chi_1 \mu_1 \right) (u_{\infty} - \varepsilon)^k \right).
\end{align*}
According to the comparison principle for parabolic equations, we have
\begin{align*}
u(x,t + t_0; t_0, u_0) \leq e^{\left( a_{\sup} + M C_0^k - \left( b_{\inf} + \chi_2 \mu_2 - \chi_1 \mu_1 \right) (u_{\infty} - \varepsilon)^k \right) (t - T_{\varepsilon})} \| u( x ,T_{\varepsilon} + t_0; t_0, u_0) \|_{L^\infty}
\end{align*}
for all $t \geq T_{\varepsilon}$. Combining with \eqref{0.9.3}, we have \( a_{\sup} + M C_0^k - (u_{\infty} - \varepsilon)^k (b_{\inf} + \chi_2 \mu_2 - \chi_1 \mu_1) \geq 0 \). The arbitrariness of \( \varepsilon \) implies that
\begin{align*}
\liminf_{t \to \infty} \inf_{x \in \mathbb{R}^N} u(x,t + t_0; t_0, u_0) \leq \left( \frac{a_{\sup} + M C_0^k}{b_{\inf} + \chi_2 \mu_2 - \chi_1 \mu_1} \right)^{\frac{1}{k}}.
\end{align*}
In particular, when \( \gamma = k + 1 \), if we further specify that \( \chi_2 \mu_2 - \chi_1 \mu_1 \geq 0 \), then
\[
\liminf_{t \to \infty} \inf_{x \in \mathbb{R}^N} u(x, t + t_0; t_0, u_0) \leq \left( \frac{a_{\sup} + M C_0^k}{b_{\inf}} \right)^{\frac{1}{k}}.
\]

\noindent Case 2. \( \gamma \neq k + 1 \).

Similar to the proof for Case 1, we omit its proof here.
For the proof of $v_i(x,t + t_0; t_0, u_0) \|_{C_{{unif}}^{1,\nu}(\mathbb{R}^N)} \leq K_1$ similar to the approach in \cite{SS1}, we omit the details herein.
\section{Persistence}
In this section, we prove the pointwise persistence and uniform persistence of positive classical solutions to \eqref{1.1}. To this end, we first present several necessary lemmas, which lay the foundation for the proof of \cref{thm:mytheorem1.4}.

\begin{lemma}\label{lemma:4.2}
Let \(\alpha \in (\frac{1}{2}, 1)\) and \( \nu \in (2 - 2\alpha, 1) \).
Assume that \eqref{0.9.0} and one of the assumptions in Cases (a) and (b) of \cref{thm:mytheorem1.4} hold. For any given positive constants $M_1$ and $\varepsilon$, there exist a positive \( T > 0 \), a sufficiently large constant \( L_0 = L(M_1, T, \varepsilon) \gg 1 \) and a parameter \( \delta_0 = \delta(M_1, T, \varepsilon) \) such that for every initial function \( u_0 \in C_{{unif}}^b(\mathbb{R}^N) \) satisfying \( 0 \leq u_0 \leq M_1 \) and for all \( L \geq L_0 \),
\[
u(x,t + t_0; t_0, u_0) \leq \varepsilon,\quad \text{for} ~~0 \leq t \leq T, ~t_0 \in \mathbb{R},  ~|x|_\infty < 2L
\]
provided that \( 0 \leq u_0 \leq \delta_0 \) for \( |x|_\infty < 3L \).
\end{lemma}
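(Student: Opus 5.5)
We prove \cref{lemma:4.2} by a localization-plus-comparison argument on the mild formulation. The idea is to compare the solution, on a finite time interval $[t_0,t_0+T]$, with the solution whose initial datum is cut off away from the cube $\{|x|_\infty<3L\}$, and to show the two differ little on the inner cube $\{|x|_\infty<2L\}$ when $L$ is large.

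First we fix a priori bounds. By \cref{thm:mytheorem1.2} (under (a) or (b) of \cref{thm:mytheorem1.4}), any $0\le u_0\le M_1$ yields a global solution with $\|u(\cdot,t+t_0;t_0,u_0)\|_{L^\infty}\le C_0(M_1)$, uniformly in $t_0$. By \cref{lemma:Lemma 2.5} we then have $\|\nabla v_i\|_{L^\infty}\le \frac{\sqrt N\mu_i}{\sqrt{\lambda_i}}C_0^k$ and $\|v_i\|_{L^\infty}\le \frac{\mu_i}{\lambda_i}C_0^k$. Rewriting the $u$-equation as
\[
u_t=-(-\Delta)^\alpha u-\nabla\cdot\bigl(u\nabla(\chi_1v_1-\chi_2v_2)\bigr)+u\bigl(a-bu^{\gamma-1}\bigr),
\]
we observe $u(a-bu^{\gamma-1})\le a_{\sup}u$. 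Hence the reaction term is dominated linearly, with a constant independent of $t_0$ and $u_0$.

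Next we split the initial datum. Choose a smooth cutoff $\phi_L$ with $\phi_L=1$ on $\{|x|_\infty\le 3L-1\}$, $\phi_L=0$ outside $\{|x|_\infty<3L\}$, and $0\le\phi_L\le 1$. Write $u_0=\phi_Lu_0+(1-\phi_L)u_0$, so that $\phi_Lu_0\le\delta_0$ everywhere. Using the variation-of-constants formula with the semigroup $T(t)$ and \cref{lemma:Lemma 2.1}, we obtain
\begin{align*}
\|u(t)\|_{L^\infty}&\le \|T(t)u_0\|_{L^\infty}+\int_0^t C_1(t-s)^{-\frac1{2\alpha}}\|u\nabla(\chi_1v_1-\chi_2v_2)\|_{L^\infty}ds+(1+a_{\sup})\int_0^t\|u(s)\|_{L^\infty}ds .
\end{align*}
This bound alone is global, so localization needs more work. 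Instead we work with the weighted quantity $w(x,t)=u(x,t)\,\rho_L(x)$. Here $\rho_L(x)=\prod_j \operatorname{sech}\bigl((x_j-\cdot)/L\bigr)$ is a slowly varying weight centered at a point $x_0$ with $|x_0|_\infty<2L$; more conveniently, we take $\rho(x)=e^{-\eta\sqrt{1+|x-x_0|^2}}$. The point of this choice is that $\rho^{-1}(-\Delta)^\alpha\rho$ and $\nabla\rho/\rho$ are bounded by $C\eta^{\min(2\alpha,1)}$-type constants, since the nonlocal operator requires the weight to have only exponential, not Gaussian, decay, and $2\alpha>1$ ensures integrability of the kernel tails. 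Taking $\eta$ small, the weighted equation has bounded coefficients, and the same Duhamel estimate together with \cref{lemma:Lemma 2.99} (with $\alpha_1=0$, $\alpha_2=\frac1{2\alpha}<1$) gives
\[
\sup_{x}|u(x,t)\rho(x)|\le C(M_1,T)\,\sup_x u_0(x)\rho(x),\qquad 0\le t\le T .
\]

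Finally we conclude. Since $|x_0|_\infty<2L$, we have $\sup_x u_0\rho\le \delta_0+M_1e^{-\eta L}$. Evaluating at $x=x_0$, where $\rho=e^{-\eta}$, gives $u(x_0,t+t_0)\le Ce^{\eta}(\delta_0+M_1e^{-\eta L})$. We choose $\delta_0=\varepsilon/(2Ce^\eta)$ and $L_0$ with $CM_1e^{\eta(1-L_0)}\le\varepsilon/2$. All constants depend only on $M_1,T$ and the coefficients, not on $t_0$, because the bounds on $a,b$ are uniform in time.

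The main obstacle is the weighted estimate: the commutator of $(-\Delta)^\alpha$ with an exponential weight, i.e.\ showing $\|\rho\,T(t)\nabla\cdot(\rho^{-1}f)\|_{L^\infty}\le Ct^{-1/(2\alpha)}\|f\|_{L^\infty}$ on $[0,T]$. We would attack it through the pointwise kernel bounds $|\nabla K_t^\alpha(x)|\le Ct^{-\frac{N+1}{2\alpha}}(1+t^{-1/(2\alpha)}|x|)^{-N-1-2\alpha}$. Since $e^{\eta|x-y|}$ is not integrable against polynomial tails, we would first try to replace the exponential weight by a polynomial one, $\rho=(1+|x-x_0|^2/L^2)^{-\beta}$ with $0<\beta<\alpha$. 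The same argument then goes through, with a polynomially small error $M_1(1+L^2)^{-\beta}$ in place of $e^{-\eta L}$, which still suffices to choose $L_0$.
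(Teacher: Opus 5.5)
Your route is genuinely different from the paper's. In its final, polynomial-weight form it works, but two points in the write-up need correcting.

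\textbf{How the two proofs differ.} The paper linearizes first. It uses $\chi_2\lambda_2v_2-\chi_1\lambda_1v_1\le MC_0^k$ and the sign of $b_{\inf}+\chi_2\mu_2-\chi_1\mu_1$ to discard the nonlinear terms, so that $u$ is a subsolution of $\overline U_t=-(-\Delta)^\alpha\overline U+\nabla(\chi_2v_2-\chi_1v_1)\cdot\nabla\overline U+(a_{\sup}+MC_0^k)\overline U$. It then observes that the bounded drift lies in the Kato class $K^{2\alpha-1}$ and imports from \cite{PR} the two-sided bound $P_b^\alpha(x,y,t)\approx t\,(t^{1/(2\alpha)}+|x-y|)^{-N-2\alpha}$. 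Finally it splits $\int P_b^\alpha u_0\,dy$ into a near part of size $C\delta_0$ and a far part of size $M_1$ times a kernel tail.

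You instead keep the divergence form and use only $u(a-bu^{\gamma-1})\le a_{\sup}u$ and $\|\nabla v_i\|_{L^\infty}\lesssim C_0^k$. You close a weighted sup-norm estimate with free-kernel bounds, a weighted version of \cref{lemma:Lemma 2.1}, and \cref{lemma:Lemma 2.99}.

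What your route buys: it is self-contained, needing only $K^\alpha(z)\lesssim(1+|z|)^{-N-2\alpha}$ and $|\nabla K^\alpha(z)|\lesssim(1+|z|)^{-N-1-2\alpha}$. It needs no sign condition beyond the global bound $C_0$. It also needs no heat-kernel theory for a drift that is time-dependent and depends on the solution. The paper's route is shorter once the external theorem is accepted, and it gives the tail $L^{-2\alpha}$ instead of your $L^{-2\beta}$, which is immaterial here.

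\textbf{First correction: the exponential weight.} The exponential weight does not merely cause technical trouble; it fails. The kernel has only polynomial tails, so $\int K_t^\alpha(z)e^{\eta|z|}\,dz=\infty$. Moreover $|\rho^{-1}(-\Delta)^\alpha\rho|$ grows like $e^{\eta|x-x_0|}|x-x_0|^{-N-2\alpha}$ for every $\eta>0$. Delete the claimed $C\eta^{\min(2\alpha,1)}$ bound and the choice of $\delta_0,L_0$ based on it.

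\textbf{Second correction: do not rescale the polynomial weight by $L$.} With $\rho=(1+|x-x_0|^2/L^2)^{-\beta}$, a point $y$ with $|y|_\infty\ge 3L$ can lie at distance $L$ from $x_0$, where $\rho(y)=2^{-\beta}$. The far field then contributes $2^{-\beta}M_1$, which is not small.

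Take $\rho(x)=(1+|x-x_0|^2)^{-\beta}$ instead. Peetre's inequality gives $\rho(x)/\rho(y)\le 2^\beta(1+|x-y|^2)^\beta$, hence for $0<\tau\le T$
\[
\sup_x\rho(x)\bigl|[T(\tau)f](x)\bigr|\le C_T\|\rho f\|_{L^\infty},\qquad \sup_x\rho(x)\bigl|[T(\tau)\nabla\cdot F](x)\bigr|\le C_T\tau^{-\frac{1}{2\alpha}}\|\rho F\|_{L^\infty}.
\]
The first bound needs $\beta<\alpha$ and the second only $\beta<\alpha+\frac12$. The constant $C_T$ is independent of $x_0$ and $L$.

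Set $W(t):=\sup_x\rho(x)u(x,t+t_0)$. Then $W(t)\le C_TW(0)+C_T\int_0^t(t-s)^{-1/(2\alpha)}W(s)\,ds$, and \cref{lemma:Lemma 2.99} applies with $\alpha_1=0$ and $\alpha_2=\frac{1}{2\alpha}<1$. If $|y|_\infty\ge 3L$ and $|x_0|_\infty<2L$, then $|y-x_0|>L$. Hence
\[
u(x_0,t+t_0)=\rho(x_0)u(x_0,t+t_0)\le W(t)\le C\bigl(\delta_0+M_1(1+L^2)^{-\beta}\bigr),
\]
which is exactly the error term you wrote. With these two fixes your proof is complete.
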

\begin{proof}
According to \eqref{1.1}, we have
\begin{align}\label{6.3}
u_t &= -(-\Delta)^\alpha u + \nabla (\chi_2 v_2 - \chi_1 v_1) \cdot \nabla u + u \left( a(x, t) + \chi_2 \lambda_2 v_2 - \chi_1 \lambda_1 v_1 \right. \nonumber \\
&\quad - b(x, t) u^{\gamma - 1} + (\chi_1 \mu_1 - \chi_2 \mu_2) u^k \left. \right) \nonumber \\
&\leq -(-\Delta)^\alpha u + \nabla (\chi_2 v_2 - \chi_1 v_1)\cdot \nabla u + u \left( a_{\sup} + M C_0^k - b_{\inf} u^{\gamma - 1} + (\chi_1 \mu_1 - \chi_2 \mu_2) u^k \right).
\end{align}
This proof will be divided into two cases.

\noindent \textbf{Case 1.} \( \gamma = k + 1 \).

Equation \eqref{6.3} becomes
\[
u_t \leq -(-\Delta)^\alpha u + \nabla (\chi_2 v_2 - \chi_1 v_1) \cdot \nabla u + u \left( a_{{\sup}} + M C_0^k - (b_{{\inf}} + \chi_2 \mu_2 - \chi_1 \mu_1) u^k \right).
\]
Since \( b_{\inf} + \chi_2 \mu_2 - \chi_1 \mu_1 > M \), we can deduce that
\[
u_t \leq -(-\Delta)^\alpha u + \nabla (\chi_2 v_2 - \chi_1 v_1) \cdot \nabla u + u \left( a_{\sup} + M C_0^k \right).
\]
Let \( \overline{U}(x, t) \) be the solution of the following equation
\[
\begin{cases}
\overline{U}_t + (-\Delta)^\alpha \overline{U} + \nabla (\chi_1 v_1 - \chi_2 v_2)\cdot \nabla \overline{U} = (a_{\sup} + M C_0^k) \overline{U}, & x \in \mathbb{R}^N, t > 0, \\
\overline{U}(x, t_0) = u_0, & x \in \mathbb{R}^N.
\end{cases}
\]
By applying the comparison principle, we can know that
\begin{align}\label{6.65}
0 \leq u(x, t+t_0;t_0,u_0) \leq \overline{U}(x, t+t_0;t_0,u_0)
\end{align}
for all $x \in \mathbb{R}^N$, $t > 0$.
Let \( \overline{W} = e^{-(a_{\sup} + M C_0^k) t} \overline{U} \), we can obtain that
\[
\begin{cases}
\overline{W}_t + (-\Delta)^\alpha \overline{W} + e^{(a_{\sup} + M C_0^k) t} \nabla (\chi_1 v_1 - \chi_2 v_2) \cdot \nabla \overline{W} = 0, & t > t_0, \\
\overline{W}(x, t_0) = e^{-(a_{{\sup}} + M C_0^k) t_0} u_0.
\end{cases}
\]
According to \cref{lemma:Lemma 2.5} and \cref{2.3.0}, we can know
\[
\begin{aligned}
K_{|\nabla (\chi_1 v_1 - \chi_2 v_2)|}^{2\alpha}(r) &= \sup_{x \in \mathbb{R}^N} \int_{B_r(x)} \frac{\nabla (\chi_1 v_1 - \chi_2 v_2)}{|x - y|^{N + 1 - 2\alpha}} dy \\
&\leq \left( \frac{\chi_1 \mu_1}{\sqrt{\lambda_1}} + \frac{\chi_2 \mu_2}{\sqrt{\lambda_2}} \right) \sqrt{N} C_0^k \sup_{x \in \mathbb{R}^N} \int_{B_r(x)} \frac{1}{\tau^{N + 1 - 2\alpha}} \tau^{N - 1} d\tau \\
&\leq \left( \frac{\chi_1 \mu_1}{\sqrt{\lambda_1}} + \frac{\chi_2 \mu_2}{\sqrt{\lambda_2}} \right) \sqrt{N} C_0^k \frac{1}{2\alpha - 1} r^{2\alpha - 1}.
\end{aligned}
\]
Combining with \( \alpha \in \left( \frac{1}{2}, 1 \right) \) and \( \lim_{r \to 0} K_{|\nabla (\chi_1 v_1 - \chi_2 v_2)|}^{2\alpha}(r) = 0 \).
According to Theorem 1.1 in \cite{PR}, for any \( x, y \in \mathbb{R}^N \), \( 0 < t \leq T \), we can obtain that
\begin{align}\label{6.66}
\overline{U}(x, t) = e^{(a_{\sup} + M C_0^k)(t - t_0)} \int_{\mathbb{R}^N} P_b^\alpha(x, y, t) u_0(y) dy
\end{align}
and
\begin{align}\label{6.67}
\frac{t}{C_2 \left( t^{\frac{1}{2\alpha}} + |x - y| \right)^{N + 2\alpha}} \leq P_b^\alpha(x, y, t) \leq \frac{C_2 t}{\left( t^{\frac{1}{2\alpha}} + |x - y| \right)^{N + 2\alpha}},
\end{align}
where \( P_b^\alpha(x, t + t_0; t_0, y) \) is the fundamental solution of \( -(-\Delta)^\alpha + \chi_2 \nabla v_2 - \chi_1 \nabla v_1 \). Combining \eqref{6.66} with \eqref{6.67}, we can derive that
\begin{align*}
\overline{U}(x,t + t_0; t_0, u_0) &\leq C e^{(a_{{\sup}} + M C_0^k)(t - t_0)} \int_{\mathbb{R}^N} \frac{t}{\left[ t^{\frac{1}{2\alpha}} + |x - y| \right]^{N + 2\alpha}} u_0(y) dy \\
&\leq C e^{(a_{{\sup}} + M C_0^k)(t - t_0)} \int_{|z|_\infty \leq \frac{L}{t^{\frac{1}{2\alpha}}}} \frac{1}{(1 + |z|)^{N + 2\alpha}} u_0 \left( x + t^{\frac{1}{2\alpha}} z \right) dz \\
& \quad + C e^{(a_{{\sup}} + M C_0^k)(t - t_0)} \int_{|z|_\infty \geq \frac{L}{t^{\frac{1}{2\alpha}}}} \frac{1}{(1 + |z|)^{N + 2\alpha}} u_0 \left( x + t^{\frac{1}{2\alpha}} z \right) dz,
\end{align*}
which implies that for \( |x| < 2L \) and \( t \leq T \), we have
\begin{align*}
&\quad \overline{U}(x,t + t_0; t_0, u_0)\\
 &\leq C e^{(a_{{\sup}} + M C_0^k)(T - t_0)} \left( \delta_0 \int_{\mathbb{R}^N} \frac{1}{(1 + |z|)^{N + 2\alpha}} dz + \| u_0 \|_{L^\infty} \int_{|z|_\infty \geq \frac{L}{T^{\frac{1}{2\alpha}}}} \frac{1}{(1 + |z|)^{N + 2\alpha}} dz \right) \\
&\leq C e^{(a_{{\sup}} + M C_0^k)(T - t_0)} \left( \delta_0 \int_{0}^{\infty} \frac{1}{(1 + r)^{N + 2\alpha}} r^{N - 1} dr + \| u_0 \|_{L^\infty} \int_{|z|_\infty \geq \frac{L}{T^{\frac{1}{2\alpha}}}} \frac{1}{(1 + |z|)^{N + 2\alpha}} dz \right) \\
&\leq C e^{(a_{{\sup}} + M C_0^k)(T - t_0)} \left( \delta_0 \int_{0}^{\infty} (1 + r)^{-2\alpha - 1} dr + \| u_0 \|_{L^\infty} \int_{|z|_\infty \geq \frac{L}{T^{\frac{1}{2\alpha}}}} \frac{1}{(1 + |z|)^{N + 2\alpha}} dz \right) \\
&\leq \frac{C \delta_0}{2\alpha} e^{(a_{{\sup}} + M C_0^k)(T - t_0)} + C M_1e^{(a_{{\sup}} + M C_0^k)(T - t_0)} \int_{|z|_\infty \geq \frac{L}{T^{\frac{1}{2\alpha}}}} \frac{1}{(1 + |z|)^{N + 2\alpha}} dz.
\end{align*}
It follows from that if we set \( \delta_0 = \frac{\alpha \varepsilon}{C} e^{-(a_{{\sup}} + M C_0^k)(T - t_0)} \) and choose a sufficiently large \( L_0 \) to ensure \( \int_{|z|_\infty \geq \frac{L}{T^{\frac{1}{2\alpha}}}} \frac{1}{(1 + |z|)^{N + 2\alpha}} dz \leq \frac{\varepsilon}{2 C M_1} e^{-(a_{{\sup}} + M C_0^k)(T - t_0)} \), then we can obtain
\[
\overline{U}(x, t + t_0; t_0, u_0) \leq \varepsilon, \quad\text{for} ~ ~|x|_\infty < 2L
\]
provided that \( 0 \leq u_0 \leq \delta_0 \) for \( |x|_\infty < 3L \). Then by using \eqref{6.65}, we complete the proof.

\noindent \textbf{Case 2.} \( \gamma \neq k + 1 \).

Similar to the proof of Case 1, we omit the details herein.\\
\end{proof}

Next, let ${{a}_{0}}=\frac{{{a}_{\inf }}}{3}$, $L>0$, $D_L = \left\{ x \in \mathbb{R}^N \mid |x_i| < L,\ i = 1, 2, \ldots, N \right\}$. Consider the following problem
\begin{equation}\label{4.2}
\left\{
\begin{aligned}
  &u_{t} = -(-\Delta)^{\alpha}u + a_0 u, &\quad &x \in D_L, \\
  &u = 0,                               &\quad &x \in \partial D_L,
\end{aligned}
\right.
\end{equation}
and the corresponding eigenvalue problem
\begin{equation*}
\left\{
\begin{aligned}
  & {{(-\Delta )}^{\alpha }}u-{{a}_{0}}u=\sigma u,&\quad &x \in D_L, \\
 & u=0,                                 &\quad &x \in \partial D_L.
\end{aligned}
\right.
\end{equation*}
The eigenvalue ${{\sigma }_{L}}$ and eigenfunction ${{\phi }_{L}}$ corresponding to the operator ${{(-\Delta )}^{\alpha }}-{{a}_{0}}I$, according to the Krein-Rutman theorem, we have
\begin{equation*}
\left\{
\begin{aligned}
  &-(-\Delta)^{\alpha} \phi_L + a_0 \phi_L = \sigma_L \phi_L, \quad &&x \in D_L, \\
  &\phi_L = 0, \quad &&x \in \partial D_L,
\end{aligned}
\right.
\end{equation*}
where $0 < \phi_L(x) \le \|\phi_L(x)\|_{L^\infty} = 1$.

It is easy to know that $u(x,t)={{e}^{{{\sigma }_{L}}t}}{{\phi }_{L}}(x)$ is a solution of \eqref{4.2}. Let $u(x, t;{{u}_{0}})={{e}^{{{\sigma }_{L}}t}}{{\phi }_{L}}(x)$ be the solution of \eqref{4.2} with the initial data ${{u}_{0}} \in C({{\bar{D}}_{L}})$. Then for any $s\in \mathbb{R}$, we have
\begin{equation}\label{4.3}
\begin{aligned}
u(x, t;s{{\phi }_{L}})=s{{e}^{{{\sigma }_{L}}t}}{{\phi }_{L}}(x).
\end{aligned}
\end{equation}
By performing a transformation on the eigenvalue ${{\sigma }_{L}}$, we conclude that for any function ${{\phi }_{L}}\in {{C}^{1}}({{D}_{L}})\cap {{C}^{0}}({{\bar{D}}_{L}})$ with $\phi_L \not\equiv 0$ that vanishes outside ${{D}_{L}}$ and on its boundary, it readily yields that
\begin{equation}\label{4.7}
\begin{aligned}
\sigma_{L} := \sup_{\phi_{L}} \left[ a_{0} - \frac{
    \frac{1}{2} \int_{\mathbb{R}^{N}} \int_{\mathbb{R}^{N}}
    \frac{(\phi_{L}(x) - \phi_{L}(y))^{2}}{|x - y|^{N + 2\alpha}} \, dy \, dx
}{\int_{D_{L}} \phi_{L}^{2}(x) \, dx}
\right].
\end{aligned}
\end{equation}
Taking ${{\phi }_{L}}=1$ as a test function in ${{D}_{L}}$, we have $\underset{L\to \infty }{\mathop{\lim }}\,{{\sigma }_{L}}={{a}_{0}}>0$.
Then there exists ${{L}_{0}}\gg 0$ such that for any $L\ge {{L}_{0}}$, it holds that ${{\sigma }_{L}}>0$.

\begin{lemma}\label{3.2.0}
Let \(\alpha \in (\frac{1}{2}, 1)\) and \( \nu \in (2 - 2\alpha, 1) \).
Assume that \eqref{0.9.0} and one of the assumptions in Cases (a) and (b) of \cref{thm:mytheorem1.4} hold. For a fixed $T>0$, there exists $\delta _{0}^{*}(T)$ satisfying $0<\delta _{0}^{*}(T)<{{M}^{+}}$ such that for any $0<\delta \le \delta _{0}^{*}(T)$, the initial data ${{u}_{0}}$ satisfying $\delta \le {{u}_{0}}\le {{M}^{+}}$, then we have
\[
\delta \le u(x,t_0 + T; t_0, 0, u_0) \le M^+
\]
for all $x \in \mathbb{R}^N$ and $t_0 \in \mathbb{R}$, where $M^+$ is defined by \eqref{0.9.1.0}.
\end{lemma}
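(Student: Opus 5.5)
The plan is to prove the two inequalities separately. The upper bound $u\le M^+$ should come from the comparison argument already used for global boundedness and in Claim 1. The lower bound $u\ge\delta$ on the fixed window $[t_0,t_0+T]$ should come from a spatially homogeneous ODE subsolution whose decay over time $T$ is controlled uniformly in $t_0$. The smallness of $\delta$ will be used to make the reaction term nonnegative along this subsolution.

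\emph{Upper bound.} We write the $u$-equation in the non-divergence form used in the proof of \cref{thm:mytheorem1.2}:
\[
u_t=-(-\Delta)^\alpha u+\nabla(\chi_2v_2-\chi_1v_1)\cdot\nabla u+u\bigl(a+\chi_2\lambda_2v_2-\chi_1\lambda_1v_1-bu^{\gamma-1}+(\chi_1\mu_1-\chi_2\mu_2)u^k\bigr).
\]
Since $u_0\le M^+$, the same Schauder fixed-point construction as in the proof of \cref{thm:mytheorem1.2} can be run on the set $Q_0$ with $C_0$ replaced by $M^+$. The key estimate there was $\chi_2\lambda_2v_2-\chi_1\lambda_1v_1\le M (M^+)^k$ for $0\le u\le M^+$. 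In case (a), $\gamma=k+1$, the constant $M^+=\bigl(a_{\sup}/(b_{\inf}+\chi_2\mu_2-\chi_1\mu_1-M)\bigr)^{1/k}$ is defined exactly so that
\[
a_{\sup}+(M+\chi_1\mu_1-\chi_2\mu_2)(M^+)^k-b_{\inf}(M^+)^{k}=0.
\]
Hence the constant $M^+$ is a supersolution, and comparison gives $u\le M^+$. In case (b), $M=0$ and $\chi_1\mu_1=\chi_2\mu_2$, so the bracket reduces to $a-bu^{\gamma-1}$, and $M^+=(a_{\sup}/b_{\inf})^{1/(\gamma-1)}$ is again a supersolution. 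Uniqueness from Proposition~\ref{pro1.1} then identifies the fixed point with $u$.

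\emph{Lower bound.} With $0\le u\le M^+$ in hand, we have $v_i\le \mu_i(M^+)^k/\lambda_i$ by elliptic comparison, and in particular $\chi_1\lambda_1v_1\le\chi_1\mu_1(M^+)^k$. Dropping the nonnegative term $\chi_2\lambda_2 v_2$ gives
\[
u_t+(-\Delta)^\alpha u-\nabla(\chi_2v_2-\chi_1v_1)\cdot\nabla u\ge u\bigl(a_{\inf}-\chi_1\mu_1(M^+)^k-b_{\sup}u^{\gamma-1}-|\chi_1\mu_1-\chi_2\mu_2|u^k\bigr).
\]
Set $\kappa:=a_{\inf}-\chi_1\mu_1(M^+)^k$.

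\emph{Easy case, $\kappa>0$.} Choose $\delta_0^*$ so small that $b_{\sup}\delta^{\gamma-1}+|\chi_1\mu_1-\chi_2\mu_2|\delta^k\le\kappa$ for all $\delta\le\delta_0^*$. Then the constant $\delta$ is a subsolution, and the comparison principle gives $u\ge\delta$ on all of $[t_0,t_0+T]$.

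\emph{Hard case, $\kappa\le0$.} This is the step I expect to be the main obstacle. The constant is no longer a subsolution, and in the worst case one only gets $u\ge\delta e^{-CT}$ with $C$ independent of $\delta$, which falls short of $\delta$. Here I would use the localization machinery prepared above. The point is that where $u$ is small, $v_1$ is small by \cref{lemma:4.2}, so the effective growth rate there is close to $a_{\inf}$.

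Concretely, I would use the principal eigenpair $(\sigma_L,\phi_L)$ on $D_L$ with $a_0=a_{\inf}/3$ and $\sigma_L>0$ for $L\ge L_0$. Around any point $x_0$, I would take the translated function $s\,e^{\sigma_Lt}\phi_L(\cdot-x_0)$ as a subsolution on the region where $\chi_1\lambda_1 v_1\le a_{\inf}/3$. The gradient-drift term is handled by the Kato-class bound already computed in the proof of \cref{lemma:4.2}, which gives two-sided heat-kernel bounds \eqref{6.67}. These bounds make the estimates uniform in $t_0$ and $x_0$, so that $\delta_0^*$ depends only on $T$ and the data.

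If this becomes too delicate, the fallback I would try first is to shrink $\delta_0^*(T)$ so that the lower Duhamel bound from \eqref{6.66}--\eqref{6.67} applies: the factor $e^{\kappa T}$ lost there is compensated by the region where $\kappa>0$ holds locally.
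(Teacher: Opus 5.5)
Your upper bound and your case $\kappa>0$ are correct. In case (b) you can do better than dropping $\chi_2\lambda_2v_2$: $\lambda_1=\lambda_2$ and $\chi_1\mu_1=\chi_2\mu_2$ give $\chi_2v_2-\chi_1v_1\equiv0$. The drift and all chemotactic reaction terms therefore vanish, and the constant $\delta$ is a subsolution once $b_{\sup}\delta^{\gamma-1}\le a_{\inf}$; your lossy bound needlessly sends (b) into the hard case. The genuine gap is the case $\kappa\le0$ under hypothesis (a), which is the real content of the lemma. The condition $\kappa>0$, i.e.\ $a_{\inf}>\chi_1\mu_1(M^+)^k$, is exactly what makes $\tilde m>0$ in the proof of \cref{thm:mytheorem1.4}(ii). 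It is not implied by $b_{\inf}+\chi_2\mu_2-\chi_1\mu_1-M>0$: for example $\chi_2=0$, $a\equiv1$, $b\equiv3/2$, $\chi_1\mu_1=1$ give $M=0$, $(M^+)^k=2$, $\kappa=-1$. For this case you offer only a sketch, and it does not close.

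Two ingredients are missing. First, the set where $\chi_1\lambda_1v_1$ and $|\nabla(\chi_2v_2-\chi_1v_1)|$ are small depends on the solution, and you never show it contains a cube around $x_0$ for all $t\in[t_0,t_0+T]$. The paper gets this only under the extra information $u_0<\delta_0$ on the large cube $x_0+D_{3L}$. Then \cref{lemma:4.2} keeps $u\le\varepsilon$ on $x_0+D_{2L}$ over the whole window, which gives \eqref{4.11}. Comparing with the solution of the linear Dirichlet problem \eqref{4.1}, whose drift is small in sup norm, and using continuity in $b_\varepsilon$ (Steps 1--3), yields $u(x_0,t_0+T)\ge e^{T\sigma_{L_0}/2}\phi_{L_0}(0)\inf_{x_0+D_L}u_0$. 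Neither of your tools replaces this. The function $se^{\sigma_Lt}\phi_L$ is not itself a subsolution once a drift is present, because near $\partial D_L$ the gradient $|\nabla\phi_L|$ is not controlled by $\phi_L$. The two-sided bounds \eqref{6.67} hold only up to the constant $C_2$, so a Duhamel lower bound yields only $u\ge c\,\delta$ with $c$ carrying the factors $C_2^{-1}$ and $e^{\kappa T}$, in general $c<1$. ``Compensation by a region where $\kappa>0$ locally'' is not an argument.

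Second, and more seriously, you do not treat the complementary situation in which $u_0$ is not small near $x_0$; there the lower bound must be independent of $\delta$. The paper handles this by the contradiction argument of Step 4, taking counterexamples with $\delta_n\to0$ and distinguishing two cases:
\begin{itemize}
\item If $|\{u_{0n}(\cdot+x_n)>\delta_0/2\}\cap D_{3L}|\to0$, it replaces the data by $\tilde u_{0n}\le\delta_0/2$ close in $L^p$. It then uses $L^p$ continuous dependence and elliptic regularity with $p>N$ to keep $v_i,\nabla v_i$ small near the point, and reruns Step 3.
\item If that measure stays bounded below, it passes to a limit solution $u^*$ with nontrivial data. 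Strict positivity $u^*(0,T)>0$ then contradicts $u(x_n,t_{0n}+T)<\delta_n\to0$.
\end{itemize}
Without such a dichotomy the lower bound $u\ge\delta$ is not established.
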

\begin{proof}
Choose $T>0$ and consider the following model
\begin{equation}\label{4.1}
\begin{cases}
u_t + (-\Delta)^\alpha u - b_\varepsilon(x, t) \cdot \nabla u = a_0 u, & \quad t_0 \le t \le t_0 + T,\ x \in D_L, \\[6pt]
u = 0, & \quad t_0 \le t \le t_0 + T,\ x \in \partial D_L
\end{cases}
\end{equation}
for any ${{t}_{0}}\le t\le {{t}_{0}}+T$, $x\in {{\bar{D}}_{L}}$, $|{{b}_{\varepsilon }}(x, t)|<\varepsilon $ and ${{\wedge }^{1-\alpha }}{{b}_{\varepsilon }}(x, t)\in {{L}^{\frac{N}{1-\alpha }}}({{D}_{L}})$, where $\wedge\triangleq(-\Delta)^{\frac{1}{2}}$. According to Lemma 2.5 in \cite{ZLZ1}, we assume that ${{u}_{{{b}_{\varepsilon, L}}}}(x,t;{{t}_{0}},{{u}_{0}})$ is the solution of \eqref{4.1} and ${{u}_{{{b}_{\varepsilon, L}}}}(x,{{t}_{0}};{{t}_{0}},{{u}_{0}})={{u}_{0}}(x)$.\\
\textbf{Step 1.} We prove that if for any $x \in {{{D}}_{L}}$, $|{{b}_{\varepsilon }}(x,t)|<\varepsilon $, there exists ${{\varepsilon }_{0}}(T)>0$ such that for any $L\ge {{L}_{0}}$, $s>0$ and $0\le \varepsilon \le {{\varepsilon }_{0}}(T)$, we have
\begin{equation}\label{4.4}
\begin{aligned}
u_{b_\varepsilon, L}\bigl(0,t_0 + T; t_0, s\phi_L\bigr)
\ge e^{\frac{T\sigma_{L_0}}{2}} s\phi_{L_0}(0)
> s\phi_{L_0}(0)
\end{aligned}
\end{equation}
and
\begin{equation}\label{4.5}
\begin{aligned}
0\le {{u}_{{{b}_{\varepsilon }},L}}(x,t+{{t}_{0}};{{t}_{0}},s{{\phi }_{L}})\le s{{e}^{{{a}_{0}}t}}
\end{aligned}
\end{equation}
for all  $0\le t\le T$, $x\in {{D}_{L}}$.
By \eqref{4.3} we know that for any $x\in {{D}_{{{L}_{0}}}}$ and $|{{b}_{\varepsilon }}(x, t)|<\varepsilon $, there exists ${{\varepsilon }_{0}}(T)>0$ such that for any $0\le \varepsilon \le {{\varepsilon }_{0}}(T)$, we have
\begin{equation}\label{4.6}
\begin{aligned}
{{u}_{{{b}_{\varepsilon }},{{L}_{0}}}}(0,{{t}_{0}}+T;{{t}_{0}}, s{{\phi }_{L}})\ge {{e}^{\frac{T{{\sigma }_{{{L}_{0}}}}}{2}}}s{{\phi }_{{{L}_{0}}}}(0).
\end{aligned}
\end{equation}
As derived in \eqref{4.7}, we can deduce that as $L$ increases, the function ${{\sigma }_{L}}$ is increasing. Therefore, for any $L\ge {{L}_{0}}$, we have ${{\sigma }_{L}}\ge {{\sigma }_{{{L}_{0}}}}$. Then we have
\[
-(-\Delta)^\alpha \phi_{L_0} + a_0 \phi_{L_0}
= \sigma_{L_0} \phi_{L_0}
\le \sigma_L \phi_{L_0}
\]
for all $x \in D_{L_0}$.

For ${{\phi }_{L}}$, we have
\[
-(-\Delta)^\alpha \phi_L + a_0 \phi_L
= \sigma_L \phi_L, ~~x \in D_{L_0}.
\]
According to the comparison principle, we can obtain
${{\phi }_{L}}(x)\ge {{\phi }_{{{L}_{0}}}}(x)$, for all $x\in {{D}_{{{L}_{0}}}}$. Note that for $t_0 \le t \le t_0 + T$, we have
\begin{equation*}
\begin{cases}
\displaystyle
\partial_t u_{b_\varepsilon, L}(\cdot,t + t_0; t_0, s\phi_L)
+(-\Delta)^\alpha u_{b_\varepsilon, L}(\cdot,t + t_0; t_0, s\phi_L) \\
\quad - b_\varepsilon(x,t) \cdot \nabla u_{b_\varepsilon, L}(\cdot,t + t_0; t_0, s\phi_L)
= a_0 u_{b_\varepsilon, L}(\cdot,t + t_0; t_0, s\phi_L),
& x \in D_{L_0}, \\[6pt]
u_{b_\varepsilon, L}(\cdot,t + t_0; t_0, s\phi_L) > 0,
& x \in \partial D_{L_0}.
\end{cases}
\end{equation*}
According to the comparison principle, we get
\[
{{u}_{{{b}_{\varepsilon, L}}}}(x, t+{{t}_{0}};{{t}_{0}},s{{\phi }_{L}})\ge {{u}_{{{b}_{\varepsilon }},{{L}_{0}}}}(x, t+{{t}_{0}};{{t}_{0}},s{{\phi }_{{{L}_{0}}}})
\]
for any $x\in {{D}_{{{L}_{0}}}}$. Combining with \eqref{4.6} can lead to \eqref{4.4}. Moreover, \eqref{4.5} can be directly obtained through the comparison principle.

\noindent \textbf{Step 2.} We discuss it in two different cases.

\noindent \textbf{Case 1.} $\gamma = k + 1$.
Consider the following model
\begin{equation}\label{4.8}
\begin{cases}
  \displaystyle
  u_t + (-\Delta)^\alpha u - b_\varepsilon(x,t) \cdot \nabla u
  = u\left(2a_0 - b(x,t)u^{\gamma - 1} - \chi_2 \mu_2 u^k\right),
  & \ x \in D_L, \\[6pt]
  u = 0,
  & \ x \in \partial D_L,
\end{cases}
\end{equation}
where ${{t}_{0}}\le t\le {{t}_{0}}+T$. We assume that ${{u}_{{{b}_{\varepsilon }},c}}(x, t;{{t}_{0}},{{u}_{0}})$ is the solution of \eqref{4.8} with ${{u}_{{{b}_{\varepsilon }},c}}(x, {{t}_{0}};{{t}_{0}},{{u}_{0}})={{u}_{0}}(x)$.

Then we prove that if for any $x\in {{\bar{D}}_{L}}$, $|{{b}_{\varepsilon }}(x,t)|<\varepsilon $, there exists ${{\varepsilon }_{0}}(T)>0$ such that for any $L\ge {{L}_{0}}$, $0 < s \leq s_0(T)$ and $0\le \varepsilon \le {{\varepsilon }_{0}}(T)$, we have
\begin{align}\label{4.9}
u_{b_{\varepsilon},c}(0,t_0 + T; t_0, s\phi_L) \ge e^{\frac{T\sigma_{L_0}}{2}} s\phi_{L_0}(0) =e^{\frac{T\sigma_{L_0}}{2}} \phi_{L_0}(0) \inf_{x \in D_L} u_0(x).
\end{align}
From \eqref{4.8}, for $t_0 \leq t \leq t_0 + T, x \in D_L$, we have
\begin{align*}
&\quad \partial_t u_{b_{\varepsilon}, L} \big(x, t; t_0, s\phi_L\big) + (-\Delta)^{\alpha} u_{b_{\varepsilon}, L} \big(x, t; t_0, s\phi_L\big) - b_{\varepsilon}(x,t) \cdot \nabla u_{b_{\varepsilon}, L} \big(x, t; t_0, s\phi_L\big) \\
&\quad - u_{b_{\varepsilon}, L} \big(x, t; t_0, s\phi_L\big) \bigg(2a_0 - b(x, t)u_{b_{\varepsilon}, L}^{\gamma - 1} \big(x, t; t_0, s\phi_L\big) - \chi_2 \mu_2 u_{b_{\varepsilon}, L}^{k} \big(x, t; t_0, s\phi_L\big) \bigg) \\
&= - u_{b_{\varepsilon}, L} \big(x, t; t_0, s\phi_L\big) \bigg(a_0 - b(x, t)u_{b_{\varepsilon}, L}^{\gamma - 1} \big(x, t; t_0, s\phi_L\big) - \chi_2 \mu_2 u_{b_{\varepsilon}, L}^{k} \big(x, t; t_0, s\phi_L\big) \bigg).
\end{align*}
Since $0 < s \leq s_0(T) := \bigg( \frac{a_0 e^{-a_0 k T}}{b_{\text{sup}} + \chi_2 \mu_2} \bigg)^{\frac{1}{k}}$, combining with \eqref{4.5}, we can get
\begin{align*}
&\quad a_0 - b(x, t)u_{b_{\varepsilon}, L}^{\gamma - 1} \big(x, t; t_0, s\phi_L\big) - \chi_2 \mu_2 u_{b_{\varepsilon}, L}^{k} \big(x, t; t_0, s\phi_L\big) \\
&\geq a_0 - b_{{\sup}} s^{k} e^{a_0 (\gamma - 1) T} - \chi_2 \mu_2 s^{k} e^{a_0 k T} \\
&\geq a_0 - (b_{{\sup}} + \chi_2 \mu_2) e^{a_0 k T} s^{k} \\
&\geq a_0 - (b_{{\sup}} + \chi_2 \mu_2) e^{a_0 k T} a_0 \frac{e^{-a_0 k T}}{b_{{\sup}} + \chi_2 \mu_2} \\
&= 0.
\end{align*}
Therefore, we can deduce that for \( t_0 \leq t \leq t_0 + T \) and \( x \in D_L \),
\begin{align*}
&\quad \partial_t u_{b_{\varepsilon}, L} \big(x, t; t_0, s\phi_L\big) + (-\Delta)^{\alpha} u_{b_{\varepsilon}, L} \big(x, t; t_0, s\phi_L\big) - b_{\varepsilon}(x, t) \cdot \nabla u_{b_{\varepsilon}, L} \big(x, t; t_0, s\phi_L\big) \\
&\leq u_{b_{\varepsilon}, L} \big(x, t; t_0, s\phi_L\big) \bigg(2a_0 - b(x, t)u_{b_{\varepsilon}, L}^{\gamma - 1} \big(x, t; t_0, s\phi_L\big) - \chi_2 \mu_2 u_{b_{\varepsilon}, L}^{k} \big(x, t; t_0, s\phi_L\big) \bigg).
\end{align*}
Let $s = \inf_{x \in D_L} u_0(x)$, then $s \leq \delta_0(T) \leq s_0(T)$. According to the comparison principle, for \( t_0 \leq t \leq t_0 + T \) and \( x \in D_L \),
\[
u_{b_{\varepsilon}, c} \big(x, t; t_0, s\phi_L\big) \geq u_{b_{\varepsilon}, L} \big(x, t; t_0, s\phi_L\big).
\]
Combining with \eqref{4.4} can lead to \eqref{4.9}.\\
\textbf{Case 2.} $\gamma \neq k + 1$.
Consider the following model
\begin{align}\label{4.8.0.0}
\begin{cases}
  \displaystyle
  u_t + (-\Delta)^\alpha u - b_\varepsilon(x,t) \cdot \nabla u
  = u\left(2a_0 - b(x,t)u^{\gamma - 1} \right),
  & \ x \in D_L, \\[6pt]
  u = 0,
  & \ x \in \partial D_L,
\end{cases}
\end{align}
where ${{t}_{0}}\le t\le {{t}_{0}}+T$. We assume that ${{u}_{{{b}_{\varepsilon }},c}}(x, t;{{t}_{0}},{{u}_{0}})$ is the solution of \eqref{4.8.0.0} with ${{u}_{{{b}_{\varepsilon }},c}}(x, {{t}_{0}};{{t}_{0}},{{u}_{0}})={{u}_{0}}(x)$.

Similar to the proof of Case 1, we can deduce that
\begin{align*}
u_{b_{\varepsilon}, \varepsilon} \big(x, t; t_0, s\phi_L\big) \geq u_{b_{\varepsilon}, L} \big(x, t; t_0, s\phi_L\big).
\end{align*}
Combining with \eqref{4.4} can lead to \eqref{4.9}.

\noindent \textbf{Step 3.} Given ${{x}_{0}}\in \mathbb{R}^N$, for any $x\in \mathbb{R}^N$, consider the following model
\begin{equation}\label{4.10}
\begin{aligned}
&\quad u_t + (-\Delta)^{\alpha} u + \nabla(\chi_1 v_1 - \chi_2 v_2) \cdot \nabla u\\
&= u\Big(a(x + x_0,t) + \chi_2 \lambda_2 v_2 - \chi_1 \lambda_1 v_1 - b(x + x_0,t) u^{\gamma - 1} + (\chi_1 \mu_1 - \chi_2 \mu_2) u^k\Big),
\end{aligned}
\end{equation}
where ${{v}_{i}}(x, t;{{t}_{0}},{{x}_{0}},{{u}_{0}})$ satisfies
$0=\Delta {{v}_{i}}-{{\lambda }_{i}}{{v}_{i}}+{{\mu }_{i}}u^{k}, i=1,2$. Let ${{\varepsilon }_{0}}(T)>0$, ${{s}_{0}}(T)>0$ following Step 1 and Step 2. Suppose that $u(x, t;{{t}_{0}},{{x}_{0}},{{u}_{0}})$ is the solution of \eqref{4.10} and $u(x,{{t}_{0}};{{x}_{0}},{{t}_{0}},{{u}_{0}})={{u}_{0}}(x)$.

Firstly, if $L\gg 1$, there exists $0<{{\delta }_{0}}(T)\le {{s}_{0}}(T)$ such that for any ${{u}_{0}}\in C_{unif}^{b}(\mathbb{R}^N)$ with $0\le {{u}_{0}}\le {{M}^{+}}$, for $|{{x}_{i}}|\le 3L,i=1,2,...,N$, ${{u}_{0}}(x)<{{\delta }_{0}}(T)$, we obtain that
\begin{equation}\label{4.11}
0 \le \lambda_1 v_1 \le \frac{a_0}{4\chi_1},~~
0 \le \lambda_2 v_2 \le \frac{a_0}{4\chi_2}, ~~
|\nabla v_1| < \frac{\varepsilon_0}{4\chi_1},~~
|\nabla v_2| < \frac{\varepsilon_0}{4\chi_2}
\end{equation}
for ${{t}_{0}}\le t\le {{t}_{0}}+T$, ${{x}_{0}}\in \mathbb{R}^N$.
Choosing $0<\varepsilon \le {{\varepsilon }_{0}}(T)$. According to \cref{lemma:4.2}, when $x\in {{D}_{3L}}$ and $0\le {{u}_{0}}(x)\le {{\delta }_{1}}$, there exist $T>0$, ${{\delta }_{1}}={{\delta }_{1}}({{M}^{+}},\varepsilon )$, ${{L}_{1}}={{L}_{1}}({{M}^{+}},T,\varepsilon )>{{L}_{0}}$ such that for any $L\ge {{L}_{1}}$, we get
\begin{equation}\label{4.12}
\begin{aligned}
u(x,t + t_0; t_0, x_0, u_0) \le \varepsilon
\end{aligned}
\end{equation}
for all $0 \le t \le T$, $t_0 \in \mathbb{R}$, $x_0 \in \mathbb{R}^N$ and $|x| \in D_{2L}$. Combining \eqref{4.11} and \eqref{4.12}, we obtain that
\begin{equation*}
\begin{aligned}
&\quad u_t + (-\Delta)^{\alpha} u + \nabla(\chi_1 v_1 - \chi_2 v_2) \cdot \nabla u\\
&= u\Big(a(x + x_0,t) -\frac{a_0}{2} - b(x + x_0,t) u^{\gamma - 1} + (\chi_1 \mu_1 - \chi_2 \mu_2) u^k\Big).
\end{aligned}
\end{equation*}
Since ${{a}_{0}}=\frac{{{a}_{\inf }}}{3}$, we get
\begin{align*}
  &\quad {{u}_{t}}+{{(-\Delta )}^{\alpha }}u+\nabla ({{\chi }_{1}}{{v}_{1}}-{{\chi }_{2}}{{v}_{2}})\cdot \nabla u\ge u\Big({{a}_{\inf }} -\frac{{{a}_{\inf }}}{6} - b(x + x_0,t) u^{\gamma - 1} + (\chi_1 \mu_1 - \chi_2 \mu_2) u^k\Big)\\
 & =u\Big(\frac{5{{a}_{\inf }}}{6}- b(x + x_0,t) u^{\gamma - 1} + (\chi_1 \mu_1 - \chi_2 \mu_2) u^k\Big) \\
 & \ge u\Big(2{{a}_{0}}-b(x + x_0,t) u^{\gamma - 1} + (\chi_1 \mu_1 - \chi_2 \mu_2) u^k\Big).
\end{align*}
Next, from \cite{ZLZ1}, we can know that ${{\wedge }^{1-\alpha }}\nabla {{v}_{i}}(x,t;{{x}_{0}},{{t}_{0}},{{u}_{0}})\in {{L}^{\frac{N}{1-\alpha }}}(\mathbb{R}^N)$ and ${{u}_{{{b}_{\varepsilon }},c}}(x,t;{{x}_{0}},{{t}_{0}},s{{\phi }_{L}})$ is the solution of \eqref{4.8}, where ${{b}_{\varepsilon }}(x,t)=\nabla ({{\chi }_{2}}{{v}_{2}}-{{\chi }_{1}}{{v}_{1}})$.
Note that for $x \in D_L$, we have
\begin{equation*}
\begin{cases}
\begin{aligned}
&u_t + (-\Delta)^{\alpha}u + \nabla(\chi_1 v_1 - \chi_2 v_2) \cdot \nabla u \ge u\Big(2a_0 - b(x + x_0,t) u^{\gamma - 1} + (\chi_1 \mu_1 - \chi_2 \mu_2) u^k\Big),
 \\[6pt]
&u(x,t;x_0, t_0, u_0) > 0,
~~~ x \in \partial D_L.
\end{aligned}
\end{cases}
\end{equation*}
Let $s=\underset{x\in {{D}_{L}}}{\mathop{\inf }}\,{{u}_{0}}(x)$, then $s\le {{\delta }_{0}}(T)\le {{s}_{0}}(T)$. According to Maximum principle, for ${{t}_{0}}\le t\le {{t}_{0}}+T$, $x\in {{D}_{L}}$, we have
\[
u(x,t;{{x}_{0}},{{t}_{0}},{{u}_{0}})\ge {{u}_{{{b}_{\varepsilon }},c}}(x,t;{{x}_{0}},{{t}_{0}},s{{\phi }_{L}}).
\]
Combining with \eqref{4.9}, it follows that
\begin{equation}\label{4.16}
\begin{aligned}
u(0,t_0 + T;x_0, t_0, u_0) \ge e^{\frac{T \sigma_{L_0}}{2}} \phi_{L_0}(0) \inf_{x \in D_L} u_0(x).
\end{aligned}
\end{equation}
\textbf{Step 4.} We prove that there exists $0<\delta _{0}^{*}(T)<\min \{{{\delta }_{0}}(T),{{M}^{+}}\}$ such that for any $0<\delta \le \delta _{0}^{*}(T)$ and any initial data ${{u}_{0}}$ satisfying $\delta \le {{u}_{0}}\le {{M}^{+}}$, we have
\[
\delta \le u(x,{{t}_{0}}+T;0,{{t}_{0}},{{u}_{0}})\le {{M}^{+}}
\]
for all $x\in \mathbb{R}^N$.

Assume that the above conclusion does not hold, that is, for ${{\delta }_{n}}\to 0$, ${{t}_{0n}}\in \mathbb{R}$, ${{\delta }_{n}}\le {{u}_{0n}}\le {{M}^{+}}$, ${{x}_{n}}\in \mathbb{R}^N$, it makes
\begin{equation}\label{4.13}
\begin{aligned}
u(x+x_{n},t;0,t_{0n},u_{0n})<\delta_{n}.
\end{aligned}
\end{equation}
Note that
\[
u(x+{{x}_{n}},t;0,{{t}_{0n}},{{u}_{0n}})=u(x,t;{{x}_{n}},{{t}_{0n}},{{u}_{0n}}(\cdot +{{x}_{n}})).
\]
Let ${{\varepsilon }_{0}}:=\varepsilon (T)>0$, ${{\delta }_{0}}:={{\delta }_{0}}(T)>0$, ${{s}_{0}}:={{s}_{0}}(T)>0$ be fixed. Let ${{D}_{0n}}=\{x\in {{R}^{N}}||{{x}_{i}}|<3L,{{u}_{0n}}(x+{{x}_{n}})>\frac{{{\delta }_{0}}}{2}\}$.
Without loss of generality, we assume that $\underset{t\to \infty }{\mathop{\lim }}\,|{{D}_{0n}}|$ exists.\\
Case 1. $\underset{t\to \infty }{\mathop{\lim }}\,|{{D}_{0n}}|=0$.\\
In this case, let ${{\{{{\tilde{u}}_{0n}}\}}_{n\ge 1}}$ be a sequence of initial values in $C_{unif}^{b}(\mathbb{R}^N)$ satisfying
\begin{equation*}
\begin{cases}
\delta_n \le \tilde{u}_{0n} \le \frac{\delta_0}{2}, \\[6pt]
\|\tilde{u}_{0n}(\cdot) - u_{0n}(\cdot + x_n)\|_{L^p(\mathbb{R}^N)} \to 0, \quad \text{as} ~~~n \to \infty
\end{cases}
\end{equation*}
for any $x \in D_{3L}$ and $p>1$.

Let
${{U}_{n}}(x,t):=u(x,t+{{t}_{0n}};{{x}_{n}},{{t}_{0n}},{{u}_{0n}}(\cdot +{{x}_{n}}))-u(x,t+{{t}_{0n}};{{x}_{n}},{{t}_{0n}},{{\tilde{u}}_{0n}})$ and ${{V}_{i,n}}(x,t):={{v}_{i}}(x,t+{{t}_{0n}};{{x}_{n}},{{t}_{0n}},{{u}_{0n}}(\cdot +{{x}_{n}}))-{{v}_{i}}(x,t+{{t}_{0n}};{{x}_{n}},{{t}_{0n}},{{\tilde{u}}_{0n}})$, ~$i=1,2$.
Therefore, ${{\{({{U}_{n}},{{V}_{i,n}})\}}_{n\ge 1}}$ satisfies
\begin{equation}\label{4.15}
\begin{cases}
\begin{aligned}
&\quad\partial_t U_n + (-\Delta)^\alpha U_n - b_n(x, t) \cdot \nabla U_n \nonumber\\
&= A_n(x, t) U_n + B_{1,n}(x, t) V_{1,n} + B_{2,n}(t,x) V_{2,n} \nonumber\\
&\quad + C_{1,n}(t,x) V_{1,n} + C_{2,n}(x, t) V_{2,n},
  && t > t_0,\ x \in \mathbb{R}^N, \nonumber\\[4pt]
&0 = \Delta V_{i,n} - \lambda_{i,n} V_{i,n} + \mu_i U_n^k,
  && t > t_0,\ x \in \mathbb{R}^N, \nonumber\\[4pt]
&U_n(x, t_0) = u_{0n}(x + x_n) - \tilde{u}_{0n}(x),
  && x \in \mathbb{R}^N,
\end{aligned}
\end{cases}
\end{equation}
where \( b_n(x, t) := \nabla(\chi_2 v_2(x + x_n ,t + t_{0n}; x_n,t_{0n}, u_{0n}(\cdot + x_n))) - \chi_1 v_1(x + x_n,t + t_{0n}; x_n,t_{0n}, u_{0n}(\cdot + x_n)) \) and
\begin{align*}
A_n(x - x_n, t - t_0) :&= a(x, t) + (\chi_1 \mu_1 - \chi_2 \mu_2)k\hat{u}^{k - 1} - b(x, t)\gamma\hat{u}^{\gamma - 1} \\
&\quad+ \chi_2 \lambda_2 v_2(x, t;x_n, t_{0n}, u_{0n}(\cdot + x_n)) - \chi_1 \lambda_1 v_1(x, t;x_n, t_{0n}, u_{0n}(\cdot + x_n)),\\
B_{1,n}(x, t) &:= -\chi_1 \lambda_1 u(x + x_n,t + t_{0n}; x_n,t_{0n}, \tilde{u}_{0n}),\\
B_{2,n}(x, t) &:= -\chi_2 \lambda_2 u(x + x_n,t + t_{0n}; x_n,t_{0n}, \tilde{u}_{0n}), \\
C_{1,n}(x, t) &:= -\chi_1 \nabla u(x + x_n,t + t_{0n}; x_n,t_{0n}, \tilde{u}_{0n}),\\
C_{2,n}(x, t) &:= \chi_2 \nabla u(x + x_n,t + t_{0n}; x_n,t_{0n}, \tilde{u}_{0n}).
\end{align*}
For ${{u}_{0n}}(\cdot, {{t}_{0}})\in {{L}^{p}}(\mathbb{R}^N)$, \eqref{4.15} has a unique solution $U(x, t;{{U}_{0n}})$ and $U(x,{{t}_{0}};{{U}_{0n}})={{U}_{0n}}(x)$ in ${{L}^{p}}(\mathbb{R}^N)$.
Since $\nabla \cdot ({{U}_{n}}{{b}_{n}})={{b}_{n}}\cdot \nabla {{U}_{n}}+{{U}_{n}}\nabla \cdot {{b}_{n}}$, then we have
\begin{align*}
  &\quad \nabla \cdot {{b}_{n}}(x, t)\\
&=\Delta ({{\chi }_{2}}{{v}_{2}}(x+{{x}_{n}},t+{{t}_{0n}};{{x}_{n}},{{t}_{0n}},{{u}_{0n}}(\cdot +{{x}_{n}}))-{{\chi }_{1}}{{v}_{1}}(x+{{x}_{n}},t+{{t}_{0n}};{{x}_{n}},{{t}_{0n}},{{u}_{0n}}(\cdot +{{x}_{n}}))) \\
 & =({{\chi }_{2}}{{\lambda }_{2}}{{v}_{2}}-{{\chi }_{1}}{{\lambda }_{1}}{{v}_{1}}-{{\chi }_{2}}{{\mu }_{2}}u^k+{{\chi }_{1}}{{\mu }_{1}}u^k)(x+{{x}_{n}},t+{{t}_{0n}};{{x}_{n}},{{t}_{0n}},{{u}_{0n}}(\cdot +{{x}_{n}})).
\end{align*}
Therefore, it follows that
\[
{{\partial }_{t}}{{U}_{n}}+{{(-\Delta )}^{\alpha }}{{U}_{n}}-\nabla \cdot ({{U}_{n}}{{b}_{n}})=({{A}_{n}}(x,t)-\nabla \cdot {{b}_{n}}){{U}_{n}}+\sum_{i=1,2}{{B}_{i,n}}(x,t){{V}_{n}}+\sum_{i=1,2}{{C}_{i,n}}(x,t)\cdot \nabla {{V}_{n}}
\]
for all $t>{{0}}$ and $x\in \mathbb{R}^N$. Since $\alpha \in (\frac{1}{2},1)$ and ${{e}^{t(-{{(-\Delta )}^{\alpha }}-I)}}$ is an analytic semi-group generated by $A=-{{(-\Delta )}^{\alpha }}-I$ in $X={{L}^{p}}(\mathbb{R}^N)$, then we have
\begin{align*}
  U_n(\cdot,t) &= e^{t(-(-\Delta)^\alpha - I)} U_n(0)+ \int_{t_0}^{t} e^{(t-\tau)(-(-\Delta)^\alpha - I)}
    \nabla \cdot \bigl(U_n(\cdot,\tau) b_n(\cdot,\tau)\bigr) d\tau \\
  &\quad + \int_{t_0}^{t} e^{(t-\tau)(-(-\Delta)^\alpha - I)} \bigl[
    (1 + A_n(\cdot,\tau) - \nabla \cdot b_n(\cdot,\tau)) U_n(\cdot,\tau)
  + B_{1,n}(\cdot,\tau) V_{1,n}(\cdot,\tau)\\ &\quad+ B_{2,n}(\cdot,\tau) V_{2,n}(\cdot,\tau)
  + C_{1,n}(\cdot,\tau) \cdot \nabla V_{1,n}(\cdot,\tau)+ C_{2,n}(\cdot,\tau) \cdot \nabla V_{2,n}(\cdot,\tau)
  \bigr] d\tau.
\end{align*}
For the term \( J_0 \), we have
\[
\left\| e^{t((-\Delta)^\alpha - I)} U_n(0) \right\|_{L^p(\mathbb{R}^N)} \leq e^{-t} \left\| U_n(0) \right\|_{L^p(\mathbb{R}^N)}.
\]
According to \cref{lemma:Lemma 2.5}, we obtain that
\[
\begin{aligned}
\left\|b_n(\cdot,t) \right\|_{L^\infty(\mathbb{R}^N)} &\leq \left( \frac{\chi_1 \mu_1}{\sqrt{\lambda_1}} + \frac{\chi_2 \mu_2}{\sqrt{\lambda_2}} \right) \sqrt{N} \left\| u\left( x + x_n,t + t_{0n}; x_n,t_{0n}, u_{0n}(\cdot + x_n) \right) \right\|_{L^\infty}^k \\
&\leq \left( \frac{\chi_1 \mu_1}{\sqrt{\lambda_1}} + \frac{\chi_2 \mu_2}{\sqrt{\lambda_2}} \right) \sqrt{N} C_0^k.
\end{aligned}
\]
For the second term \( J_1 \), according to \cref{lemma2.1.0}, we deduce that
\[
\begin{aligned}
\| J_1 \|_{L^p(\mathbb{R}^N)} &\leq \int_{t_0}^t e^{(t-\tau)((-\Delta)^\alpha - I)} \nabla \cdot \left( U_n(\tau, \cdot) b_n(\cdot,\tau) \right) d\tau \\
&\leq C_1 \int_{t_0}^t e^{-(t-\tau)} (t - \tau)^{-\frac{1}{2\alpha}} \| U_n(\cdot,\tau) \|_{L^p(\mathbb{R}^N)} \| b_n(\cdot,\tau) \|_{L^\infty(\mathbb{R}^N)} d\tau \\
&\leq C_1 \left( \frac{\chi_1 \mu_1}{\sqrt{\lambda_1}} + \frac{\chi_2 \mu_2}{\sqrt{\lambda_2}} \right) \sqrt{N} C_0^k \int_{t_0}^t e^{-(t-\tau)} (t - \tau)^{-\frac{1}{2\alpha}} \| U_n(\cdot,\tau) \|_{L^p(\mathbb{R}^N)} d\tau.
\end{aligned}
\]
For the remaining term \( J_2 \), since $$\sup_{\substack{0 \leq t \leq T, n \geq 1}} \| (1 + A_n(\cdot,t) - \nabla \cdot b_n(\cdot,t)) \|_{L^\infty} \leq \infty$$ and $$\sup_{\substack{0 \leq t \leq T, n \geq 1}} \| B_{i,n}(\cdot,t) \|_{L^\infty} \leq \infty,~~ \sup_{\substack{0 \leq t \leq T, n \geq 1}} \| C_{i,n}(\cdot,t) \|_{L^\infty} \leq \infty$$
for $ i = 1,2 $, which can be easily shown to be bounded, it follows that
\begin{align*}
\| J_2 \|_{L^p(\mathbb{R}^N)} &\leq \int_{t_0}^t e^{-(t-\tau)} \left[ \| 1 + A_n(\cdot,\tau) - \nabla \cdot b_n(\cdot,\tau) \|_{L^\infty} \| U_n(\tau, \cdot) \|_{L^p(\mathbb{R}^N)} \right. \\
 &\quad + \| B_{1,n}(\cdot,\tau) \|_{L^\infty} \| V_{1,n}(\cdot,\tau) \|_{L^p(\mathbb{R}^N)} + \| B_{2,n}(\cdot,\tau) \|_{L^\infty} \| V_{2,n}(\cdot,\tau) \|_{L^p(\mathbb{R}^N)} \\
&\quad + \| C_{1,n}(\cdot,\tau) \|_{L^\infty} \| V_{1,n}(\cdot,\tau) \|_{W^{1,p}(\mathbb{R}^N)} + \| C_{2,n}(\cdot,\tau) \|_{L^\infty} \| V_{2,n}(\cdot,\tau) \|_{W^{1,p}(\mathbb{R}^N)} \left. \right] d\tau \\
&\leq C_1 \int_{t_0}^t e^{-(t-\tau)} \left[ \| U_n(\cdot,\tau) \|_{L^p(\mathbb{R}^N)} + \sum_{i=1,2}\| V_{i,n}(\cdot,\tau) \|_{W^{1,p}(\mathbb{R}^N)} \right] d\tau.
\end{align*}
Given \( (\Delta - \lambda_i I) V_{i,n} = -\mu_i U_n^k \), it follows from elliptic regularity results that
\[
\| V_{i,n} \|_{W^{2,p}(\mathbb{R}^N)} \leq C \| U_n^k \|_{L^p(\mathbb{R}^N)}, \, i = 1,2.
\]
From the estimates derived above, it follows that
\[
\| J_2 \|_{L^p(\mathbb{R}^N)} \leq C_1 \int_{t_0}^t e^{-(t-\tau)} \left( \| U_n(\cdot,\tau) \|_{L^p(\mathbb{R}^N)} + \| U_n^k(\cdot,\tau) \|_{L^p(\mathbb{R}^N)} \right) d\tau.
\]
Therefore, we obtain that
\begin{equation}\label{4.16}
\begin{aligned}
&\quad \| U_n(\cdot,t) \|_{L^p(\mathbb{R}^N)}\\
&\leq e^{-t} \| U_n(0) \|_{L^p(\mathbb{R}^N)} + C_1 \left( \frac{\chi_1 \mu_1}{\sqrt{\lambda_1}} + \frac{\chi_2 \mu_2}{\sqrt{\lambda_2}} \right) C_0^k \int_{t_0}^t e^{-(t-\tau)} (t - \tau)^{-\frac{1}{2\alpha}} \| U_n(\cdot,\tau) \|_{L^p(\mathbb{R}^N)} d\tau \\
&\quad + C_1 \int_{t_0}^t e^{-(t-\tau)} \left( \| U_n(\cdot,\tau) \|_{L^p(\mathbb{R}^N)} + \| U_n^k(\cdot,\tau) \|_{L^p(\mathbb{R}^N)} \right) d\tau.
\end{aligned}
\end{equation}
In particular, we have
\begin{align*}
\| U_n^k \|_{L^p(\mathbb{R}^N)} &= \left( \int_{\mathbb{R}^N} (U_n^k)^p dx \right)^{\frac{1}{p}} \\
&= \left( \int_{\mathbb{R}^N} U_n^p (U_n^{p(k - 1)}) dx \right)^{\frac{1}{p}} \\
&\leq \sup_{x \in \mathbb{R}^N} (U_n^{p(k - 1)})^{\frac{1}{p}} \left( \int_{\mathbb{R}^N} U_n^p dx \right)^{\frac{1}{p}} \\
&= \sup_{x \in \mathbb{R}^N} (U_n^{(k - 1)}) \| U_n \|_{L^p(\mathbb{R}^N)}.
\end{align*}
By incorporating this fact, \eqref{4.16} can be rewritten
\begin{align*}
\| U_n(\cdot,t) \|_{L^p(\mathbb{R}^N)} \leq e^{-t} \| U_n(0) \|_{L^p(\mathbb{R}^N)} + C_1 \int_{t_0}^t e^{-(t-\tau)} (t - \tau)^{-\frac{1}{2\alpha}} \| U_n(\cdot,\tau) \|_{L^p(\mathbb{R}^N)} d\tau.
\end{align*}
From the conclusion of \cref{lemma:Lemma 2.99}, we can directly obtain that
\begin{align*}
\| U_n(\cdot,t) \|_{L^p(\mathbb{R}^N)} \leq C(T) \| U_n(\cdot,0) \|_{L^p(\mathbb{R}^N)}
\end{align*}
for all $0 \leq t \leq T$ and $n \geq 1$. Therefore, it follows that
\begin{equation}\label{4.17}
\begin{aligned}
\limsup_{n \to \infty} \sup_{0 \leq t \leq T} \| U_n(\cdot,t) \|_{L^p(\mathbb{R}^N)} = 0 .
\end{aligned}
\end{equation}
When \( p > N \), based on the regularity properties and a priori estimates associated with elliptic operators, we can find a positive constant \( C \) satisfying
\[
\| (\Delta - \lambda_i I) U \|_{C_{unif}^b(\mathbb{R}^N)} \leq C \| U \|_{L^p(\mathbb{R}^N)}
\]
for all $U \in L^p(\mathbb{R}^N)$. This along with \eqref{4.17} demonstrates that
\[
\limsup_{n \to \infty} \sup_{0 \leq t \leq T} \| V_{i,n}(\cdot,t) \|_{C_{unif}^{1,b}(\mathbb{R}^N)} = 0, \, i = 1,2.
\]
According to Step 3 of \cref{3.2.0}, for any $n\ge 1$, $0\le t\le T$, ${{x}_{0}}\in \mathbb{R}^N$, we have
\begin{align}\label{4.72}
  & 0\le {{\lambda }_{1}}{{v}_{1}}(x,t+{{t}_{0n}};{{t}_{0n}},{{{\tilde{u}}}_{0n}})\le \frac{{{a}_{0}}}{4{{\chi }_{1}}}, \nonumber\\
 & 0\le {{\lambda }_{2}}{{v}_{2}}(x,t+{{t}_{0n}};{{t}_{0n}},{{{\tilde{u}}}_{0n}})\le \frac{{{a}_{0}}}{4{{\chi }_{2}}}, \nonumber\\
 & |{{\chi }_{1}}\nabla {{v}_{1}}(x,t+{{t}_{0n}};{{t}_{0n}},{{{\tilde{u}}}_{0n}})|<\frac{{{\varepsilon }_{0}}}{4}, \nonumber\\
 & |{{\chi }_{2}}\nabla {{v}_{2}}(x,t+{{t}_{0n}};{{t}_{0n}},{{{\tilde{u}}}_{0n}})|<\frac{{{\varepsilon }_{0}}}{4}.
\end{align}
From \eqref{4.72}, when $n\gg 1$ we know that
\begin{align*}
  & 0\le {{\chi }_{1}}{{\lambda }_{1}}{{v}_{1}}(x,t+{{t}_{0n}};{{t}_{0n}},{{u}_{0n}}(\cdot +{{x}_{n}}))\le \frac{{{a}_{0}}}{2}, \\
 & 0\le {{\chi }_{2}}{{\lambda }_{2}}{{v}_{2}}(x,t+{{t}_{0n}};{{t}_{0n}},{{u}_{0n}}(\cdot +{{x}_{n}}))\le \frac{{{a}_{0}}}{2}, \\
 & |{{\chi }_{1}}\nabla {{v}_{1}}(x,t+{{t}_{0n}};{{t}_{0n}},{{u}_{0n}}(\cdot +{{x}_{n}}))|<\frac{{{\varepsilon }_{0}}}{2}, \\
 & |{{\chi }_{2}}\nabla {{v}_{2}}(x,t+{{t}_{0n}};{{t}_{0n}},{{u}_{0n}}(\cdot +{{x}_{n}}))|<\frac{{{\varepsilon }_{0}}}{2}.
\end{align*}
for all $ 0\le t\le T$, $x\in {{D}_{L}}$. Therefore, we have
\begin{align*}
  & |{{\chi }_{1}}{{\lambda }_{1}}{{v}_{1}}(x,t+{{t}_{0n}};{{t}_{0n}},{{u}_{0n}}(\cdot +{{x}_{n}}))-{{\chi }_{2}}{{\lambda }_{2}}{{v}_{2}}(x,t+{{t}_{0n}};{{t}_{0n}},{{u}_{0n}}(\cdot +{{x}_{n}}))|\le {{a}_{0}}, \\
 & |{{\chi }_{1}}\nabla {{v}_{1}}(x,t+{{t}_{0n}};{{t}_{0n}},{{u}_{0n}}(\cdot +{{x}_{n}}))-{{\chi }_{2}}\nabla {{v}_{2}}(x,t+{{t}_{0n}};{{t}_{0n}},{{u}_{0n}}(\cdot +{{x}_{n}}))|<{{\varepsilon }_{0}}.
\end{align*}
for all $ 0\le t\le T$, $x\in {{D}_{L}}$. Similar to the method in Step 3 of \cref{3.2.0}, we can obtain $$u(0,T+{{t}_{0n}};{{x}_{n}},{{t}_{0n}},{{u}_{0n}}(\cdot +{{x}_{n}}))>{{\delta }_{n}},$$ which contradicts the assumption. Therefore, Case 1 does not hold.\\
Case 2. $\underset{n\to \infty }{\mathop{\lim \inf }}\,|{{D}_{0n}}|>0$.

Without loss of generality, we assume $\underset{n\ge 1}{\mathop{\inf }}\,|{{D}_{0n}}|>0$ and we have $\underset{n\ge 1}{\mathop{\inf }}\,|D\cap {{D}_{0n}}|>0$ for $D\subset \subset {{D}_{3L}}$. Consider the following model
\begin{equation}\label{4.18}
\begin{cases}
u_t = -(-\Delta)^\alpha u, & x \in D_{3L}, \\
u = 0, & \text{on } (0, T) \times \partial D_{3L}, \\
u(\cdot, t_0) = \frac{\delta_0}{2} \chi_{D \cap D_{0n}}.
\end{cases}
\end{equation}
Let ${{\psi }_{n}}(x,t)$ be the solution of \eqref{4.18}. According to the comparison principle, we have
\begin{align}\label{4.19}
{{e}^{t(-{{(-\Delta )}^{\alpha }})}}{{u}_{0n}}(x+{{x}_{n}})\ge {{\psi }_{n}}(x,t)
\end{align}
for all ${{t}_{0}}\le t\le {{t}_{0}}+T$, $x\in {{D}_{3L}}$ and $n\ge 1$. From \eqref{4.19}, we obtain that
\begin{align}\label{4.20}
\left\| e^{t(-(-\Delta)^\alpha)} u_{0n}(x + x_n) \right\|_{C^\infty(D_{3L})}^2 \geq \frac{1}{|D_{3L}|} \int_{D_{3L}} \psi_n^2(x,t) \, dx
\end{align}
for all $t_0 \leq t \leq t_0 + T$, $x \in D_{3L}$ and $n \geq 1$. Note that for any $n\ge 1$, ${{\psi }_{n}}(x,t)$ can be written in the following form
\[
{{\psi }_{n}}(x,t)=\frac{{{\delta }_{0}}}{2}\sum\limits_{i=1}^{\infty }{{{e}^{-t{{\lambda }_{i}}}}{{\phi }_{i}}(x)[\int_{{{D}_{3L}}}{{{\phi }_{i}}(y){{\chi }_{D\cap {{D}_{0n}}}}(y)dy}]},
\]
where $\{\phi_i\}_{i \ge 1}$ denotes the orthonormal basis of $L^2(D_{3L})$ consisting of eigenfunctions of the fractional Laplacian $(-\Delta)^\alpha$ under Dirichlet boundary conditions, with their corresponding eigenvalues $\{\lambda_i\}$. Since $\lambda_1$ is the principal eigenvalue, its associated eigenfunction $\phi_1$ is positive throughout $D_{3L}$, i.e., $\phi_1(x) > 0$ for all $x \in D_{3L}$. Therefore, we get
\begin{align}\label{4.21}
  \left\| \psi_n(\cdot,t) \right\|_{L^2(D_{3L})}^2 &= \int_{D_{3L}} \left( \sum_{i=1}^{\infty} e^{-t\lambda_i} \phi_i(x) \left[ \frac{\delta_0}{2} \int_{D_{3L}} \phi_i(y) \chi_{D \cap D_{3L}}(y) \, dy \right] \right)^2 dx \nonumber\\
  &= \sum_{i=1}^{\infty} e^{-2t\lambda_i} \left( \int_{D_{3L}} \phi_i^2(x) \, dx \right) \left[ \frac{\delta_0}{2} \int_{D_{3L}} \phi_i(y) \chi_{D \cap D_{3L}}(y) \, dy \right]^2 \nonumber\\
  &\geq e^{-2t\lambda_1} \left[ \frac{\delta_0}{2} \int_{D_{3L}} \phi_1(y) \chi_{D \cap D_{3L}}(y) \, dy \right]^2 \nonumber\\
  &\geq e^{-2t\lambda_1} \left[ \frac{\delta_0}{2} \left| D \cap D_{0n} \right| \min_{y \in D} \phi_1(y) \right]^2.
\end{align}
Since $\underset{n\ge 1}{\mathop{\inf }}\,|D\cap {{D}_{0n}}|>0$ and $\underset{y\in D}{\mathop{\min }}\,{{\phi }_{1}}(y)>0$, combining \eqref{4.20} with \eqref{4.21}, we can obtain
\begin{equation}\label{4.22}
\begin{aligned}
\underset{{{0}}\le t\le+T}{\mathop{\inf }}\,||{{e}^{t(-{{(-\Delta )}^{\alpha }}-I)}}{{u}_{0n}}(x+{{x}_{n}})|{{|}_{C({{D}_{3L}})}}>0.
\end{aligned}
\end{equation}
According to \eqref{4.22}, there exists $0<{{T}_{0}}\ll 1$ such that
\[
\inf_{n \geq 1} \left\| u\left(\cdot,T_0 + t_{0n};x_n,t_{0n}, u_{0n}(\cdot + x_n) \right) \right\|_{C^0(D_{3L})} > 0.
\]
Therefore, we assume that \( u\left( \cdot,T_0 + t_{0n};x_n,t_{0n}, u_{0n}(\cdot + x_n) \right) \) converges locally uniformly to \( u_0^* \) and \( \left\| u_0^* \right\|_{C(D_{3L})} > 0 \).
In addition, similar to Lemma 4.2 of \cite{ZLZ1}, we can assume that
$(u(\cdot, t+{{t}_{0n}};{{x}_{n}},{{t}_{0n}},{{u}_{0n}}(\cdot +{{x}_{n}})),{{v}_{i}}(\cdot, t+{{t}_{0n}} ;{{x}_{n}},{{t}_{0n}},{{u}_{0n}}(\cdot +{{x}_{n}})))\to ({{u}^{*}}(x,t),{{v}_{i}^{*}}(x,t))$,
$a(x+{{x}_{n}},t)\to {{a}^{*}}(x,t)$, $b(x+{{x}_{n}},t)\to {{b}^{*}}(x,t)$ and $({{u}^{*}}(x,t),{{v}_{i}^{*}}(x,t))$ satisfies
\begin{equation*}
\begin{cases}
\begin{aligned}
&u_t^* = \bigl(-(-\Delta)^\alpha - I\bigr)u^*
  - \chi_1 \nabla \cdot \bigl(u^* \nabla v_1^*\bigr)
  + \chi_2 \nabla \cdot \bigl(u^* \nabla v_2^*\bigr)\\
  &\quad~~~~+ u^*\Bigl(\bigl(a^*(x,t) + 1\bigr) - b^*(x,t)u^{*,\gamma-1}\Bigr), \\[6pt]
&0 = \Delta v_1^*- \lambda_1 v_1^* + \mu_1 u^{*,k}, \\[4pt]
&0 = \Delta v_2^*- \lambda_2 v_2^* + \mu_2 u^{*,k}, \\[4pt]
&u^*(T_0) = u_0^*.
\end{aligned}
\end{cases}
\end{equation*}
Because $||u_{0}^{*}|{{|}_{\infty }}>0$ and ${{u}^{*}}(x,t)\ge 0$, according to the comparison principle of parabolic equations, we conclude that ${{u}^{*}}(x,t)>0$ for any $x\in \mathbb{R}^N$ and $t\in ({{T}_{0}},\infty )$.
In particular, ${{u}^{*}}(0,T)>0$.
However, according to \eqref{4.13}, we have ${{u}^{*}}(0,T)=0$, which contradicts the above conclusion. That is, Step 4 is proved.
\end{proof}
With the above conclusion, we prove the point persistence and uniform persistence of the solutions of \eqref{1.1}.\\
\textbf{Proof of Theorem 1.2}. Now we prove the persistence of classical solutions with strictly positive initial data for \eqref{1.1}.

\noindent Case 1. Pointwise Persistence.

Assume that condition (a) or (b) of part (i) in \cref{thm:mytheorem1.4} holds.
Fix ${{T}_{0}}>0$ and let $\delta _{0}^{*}$ and ${{M}^{+}}$ be defined as in Step 4 of \cref{3.2.0}. According to \cref{thm:mytheorem1.2}, for any ${{u}_{0}}\in C_{unif}^{b}(\mathbb{R}^N)$ with $\underset{x\in \mathbb{R}^N}{\mathop{\inf }}\,{{u}_{0}}(x)>0$, we have
\[
u(x,t;{{t}_{0}},{{u}_{0}})\le C({{u}_{0}})
\]
for all $t\ge {{t}_{0}}$, $x\in \mathbb{R}^N$.
According to \eqref{0.9.1}, there exists ${{T}_{1}}>0$ such that
\[
u(x,t;{{t}_{0}},{{u}_{0}})\le {{M}^{+}}
\]
for all $t\ge {{t}_{0}}+{{T}_{1}}$, $x\in \mathbb{R}^N$. Note that
$\underset{x\in {{R}^{N}}}{\mathop{\inf }}\,u(x,{{t}_{0}}+{{T}_{1}};{{t}_{0}},{{u}_{0}})>0$,
then there exists $0<\delta \le \delta _{0}^{*}$ such that
\begin{align*}
\delta \le u(x,{{t}_{0}}+{{T}_{1}};{{t}_{0}},{{u}_{0}})\le {{M}^{+}},
\end{align*}
for all $x\in \mathbb{R}^N$.
According to Step 4 of \cref{3.2.0}, this shows that there exists $\bar{m}({{u}_{0}})>0$ such that
\begin{align*}
\bar{m}({{u}_{0}})\le u(x,t;{{t}_{0}},{{u}_{0}})\le {{M}^{+}},
\end{align*}
for all $t\ge {{t}_{0}}$, $x\in \mathbb{R}^N$.

\noindent Case 2. Uniform Persistence.

Assume that condition (a) or (b) of part (ii) in \cref{thm:mytheorem1.4} holds.

Given an initial value data $u_0 \in C_{{unif}}^b(\mathbb{R}^N)$ with $\underset{x\in \mathbb{R}^N}{{\inf }}\,{{u}_{0}}(x)>0$, define
\[
\underline{u} = \liminf_{t \to \infty} \inf_{x \in \mathbb{R}^N} u(x, t + t_0; t_0, u_0), \quad \bar{u} = \limsup_{t \to \infty} \sup_{x \in \mathbb{R}^N} u(x, t + t_0; t_0, u_0).
\]
From Case 1, we know that $\underline{u}>0$. According to the definitions of \( \limsup \) and \( \liminf \), for any \( 0 < \varepsilon < \underline{u} \), there exists \( T_\varepsilon > 0 \) such that
\[
\underline{u} - \varepsilon \leq u(x, t; t_0, u_0) \leq \bar{u} + \varepsilon
\]
for all $x \in \mathbb{R}^N$ and $t \geq T_\varepsilon$, $i = 1, 2$. According to the comparison principle of elliptic equations, we have
\[
\mu_i (\underline{u} - \varepsilon)^k \leq \lambda_i v_i(x, t; t_0, u_0) \leq \mu_i (\bar{u} + \varepsilon)^k
\]
for all $x \in \mathbb{R}^N$ and $t \geq T_\varepsilon$, $i = 1, 2$. To prove
\[
\tilde{m} \leq \underline{u} \leq \bar{u} \leq \tilde{M},
\]
we appropriately estimate the expression for \( u \) in two different cases.\\
\textbf{Case i}. $\gamma=k+1$.
\[
\begin{aligned}
u_t &=-(-\Delta)^\alpha u - \nabla(\chi_1 v_1 - \chi_2 v_2) \cdot \nabla u + u\left(a(x + x_0,t) + \chi_2 \lambda_2 v_2 - \chi_1 \lambda_1 v_1\right) \\
&\quad- \left(b(x + x_0,t) + \chi_2 \mu_2 - \chi_1 \mu_1\right) u^{k+1} \\
&\geq-(-\Delta)^\alpha u - \nabla(\chi_1 v_1 - \chi_2 v_2) \cdot \nabla u + u\left(a_{\inf} + \chi_2 \mu_2 (\underline{u} - \varepsilon)^k - \chi_1 \mu_1 (\bar{u} + \varepsilon)^k\right) \\
&\quad- \left(b_{\sup} + \chi_2 \mu_2 - \chi_1 \mu_1\right) u^{k+1}
\end{aligned}
\]
for any \( t \geq t_0 + T_\varepsilon \). Combining with the comparison principle of parabolic equations, we have
\begin{align*}
\underline{u} &\geq \left( \frac{a_{\inf} - \chi_1 \mu_1 (\bar{u} + \varepsilon)^k + \chi_2 \mu_2 (\underline{u} - \varepsilon)^k}{b_{\sup} - \chi_1 \mu_1 + \chi_2 \mu_2} \right)^{\frac{1}{k}} \\
&\geq \left( \frac{a_{\inf} - \chi_1 \mu_1 (\bar{u} + \varepsilon)^k}{b_{\sup} - \chi_1 \mu_1 + \chi_2 \mu_2} \right)^{\frac{1}{k}}.
\end{align*}
Let \( \varepsilon \to 0 \), we get
\[
\underline{u} \geq \left( \frac{a_{\inf} - \chi_1 \mu_1 \bar{u}^k}{b_{\sup} - \chi_1 \mu_1 + \chi_2 \mu_2} \right)^{\frac{1}{k}}.
\]
According to \eqref{0.9.1}, we deduce that
\[
\bar{u}\le (\frac{{{a}_{\sup }}}{{{b}_{\inf }}-{{\chi }_{1}}{{\mu }_{1}}+{{\chi }_{2}}{{\mu }_{2}}-M})^{\frac{1}{k}}.
\]
Then we have
\[
\begin{aligned}
\underline{u} \geq \tilde{m} &= \left( \frac{a_{\inf} - \chi_1 \mu_1 \left( \frac{a_{\sup}}{b_{\inf} - \chi_1 \mu_1 + \chi_2 \mu_2 - M} \right)}{b_{\sup} - \chi_1 \mu_1 + \chi_2 \mu_2} \right)^{\frac{1}{k}} \\
&= \left( \frac{a_{\inf} \left( b_{\inf} - \left( 1 + \frac{a_{\sup}}{a_{\inf}} \right) \chi_1 \mu_1 + \chi_2 \mu_2 - M \right)}{(b_{\sup} - \chi_1 \mu_1 + \chi_2 \mu_2)(b_{\inf} - \chi_1 \mu_1 + \chi_2 \mu_2 - M)} \right)^{\frac{1}{k}}.
\end{aligned}
\]
\textbf{Case ii}. $\gamma \neq k+1$.
\[
\begin{aligned}
&\quad u_t \\
&= -(-\Delta)^\alpha u - \nabla(\chi_1 v_1 - \chi_2 v_2) \cdot \nabla u + u\left(a(x + x_0,t) + \chi_2 \lambda_2 v_2 - \chi_1 \lambda_1 v_1 - b(x + x_0,t) u^{\gamma - 1}\right) \\
&\geq -(-\Delta)^\alpha u - \nabla(\chi_1 v_1 - \chi_2 v_2) \cdot \nabla u + u\left(a_{\inf} + \chi_2 \mu_2 (\underline{u} - \varepsilon)^k - \chi_1 \mu_1 (\bar{u} + \varepsilon)^k -b_{\sup} u^{\gamma - 1}\right)
\end{aligned}
\]
for any \( t \geq t_0 + T_\varepsilon \). Combining with the comparison principle of parabolic equations, we have
\[
\begin{aligned}
\underline{u} &\geq \left( \frac{a_{\inf} - \chi_1 \mu_1 \bar{u}^k + \chi_2 \mu_2 (\underline{u} - \varepsilon)^k}{b_{\sup}} \right)^{\frac{1}{\gamma - 1}} \\
&\geq \left( \frac{a_{\inf} - \chi_1 \mu_1 \bar{u}^k}{b_{\sup}} \right)^{\frac{1}{\gamma - 1}}.
\end{aligned}
\]
According to \eqref{0.9.1}, we deduce that $\bar{u} \leq \left( \frac{a_{\sup}}{b_{\inf}} \right)^{\frac{1}{\gamma - 1}}$.
Then we have
\begin{align*}
\underline{u}\geq \tilde{m} &=( \frac{a_{\inf} - \chi_1 \mu_1 \left( \frac{a_{\sup}}{b_{\inf}} \right)^{\frac{k}{\gamma - 1}}}{b_{\sup}} )^{\frac{1}{\gamma - 1}} \\
&=( \frac{a_{\inf} ( b_{\inf}^{\frac{k}{\gamma - 1}} - \chi_1 \mu_1 \frac{a_{\sup}^{\frac{k}{\gamma - 1}}}{a_{\inf}} )}{b_{\sup} b_{\inf}^{\frac{k}{\gamma - 1}}})^{\frac{1}{\gamma - 1}}.
\end{align*}
$\hfill\Box$

\section*{Conflicts of Interest}
Authors have no conflict of interest to declare.

\section*{Acknowledgment}
This work was supported by the National Natural Science Foundation of China (11871134, 12171166).

\end{document}